\newtheorem{theorem}{Theorem}
\newtheorem*{theorem*}{Theorem}
\title{The secret life of matrix factorizations: how matrix decompositions reveal and keep secrets of linear equations and what we can do about it
}
\author{
  Michał P. Karpowicz\\
  NASK National Research Institute\\
  Warsaw, Poland\\
  \texttt{michal.karpowicz@nask.pl}\\
}
\begin{document}
\maketitle

\begin{abstract}

This paper explores the relationship between matrix factorizations and linear matrix equations. It shows that every matrix factorization defines two hidden projectors, one for the column space and one for the row space of a matrix, and how to calculate them. The projectors can be applied to solve linear matrix equations, generate low-rank approximations, or design randomized matrix algorithms. But also, as demonstrated, they can be applied in cryptography to encrypt and decrypt messages. The paper discusses some of the security implications of this application and leaves some questions open for further investigation. The basic concepts are illustrated with source code listings. Finally, this work shares some personal reflections on the meaning and importance of understanding in the time of the artificial intelligence revolution.

\end{abstract}


\section{Introduction}

Matrix factorization, the idea that we can decompose a matrix into a product of other matrices, plays a leading role in mathematics, science, and engineering. I think there are at least two good reasons for that. First, matrix factorization explains how numbers stored in columns and rows of a matrix relate to each other. It allows us to count and identify columns and rows that are linearly independent and show how to reconstruct the remaining dependent ones. It can even tell us which columns and rows are more important than others. It helps us understand the time evolution of dynamical systems, conservation laws in physics, internal structures of networks, components of signals, and the list goes on and on. 

Second, matrix factorization algorithms, or matrix factorizations for short, help us learn from data, and learning usually means solving linear equations. Even when dealing with nonlinear problems, we numerically solve linear equations in the background, which involves what we can learn about the problem from matrix factorizations. There exists an intimate relationship between matrix factorizations and linear matrix equations.

What is the nature of the relationship that makes all those tricks possible? As we will see, every matrix factorization defines two hidden projectors: one for the column space and one for the row space of a matrix. The first projects the original columns onto the column space, and the second projects the original rows onto the row space of the original matrix. That is why the product of factorization factors reconstructs the matrix and enables low-rank approximations when properly designed.

In this paper, I explore the origins and structure of these hidden projectors. To explain the key ideas better, I start with elementary concepts, illustrate them with simple numerical examples, and then move to more general ones. Also, from many potential applications, including the design of new factorizations, low-rank approximations, and randomized optimization algorithms, I present one in cryptography that I find intriguing, important, and inspiring. The projectors hidden in matrix factorizations can be used to hide secrets. We will design a cryptographic system encrypting and decrypting messages and address the challenging security questions while keeping others open for further investigation.

\paragraph{Notation.} Capital bold letters denote matrices, $\mathbf{W}^*$ denotes the conjugate transpose of $\mathbf{W}$, and $\mathbf{W}^+$ denotes the Moore-Penrose pseudoinverse of $\mathbf{W}$. We consider the case of $\mathbf{W}\in\mathbb{K}^{m\times n}$, where $\mathbb{K}$ is either $\mathbb{R}$ or $\mathbb{C}$. The lowercase bold letter $\mathbf{w}$ denotes a column vector, whereas $\mathbf{w}^*$ denotes a row vector. The $k$ by $k$ identity matrix is denoted as $\mathbf{I}_k$. Given ordered subindex sets $I$ and $J$, $\mathbf{A}(I,J)$ denotes the submatrix of $\mathbf{A}$ containing rows and columns of $\mathbf{A}$ indexed by $I$ and $J$, with $\mathbf{A}(:,J)$ extracting the columns of $\mathbf{A}$ indexed by $J$. With $k$ being a positive integer, $1{:}k$ denotes the ordered set $(1,\dots,k)$. 

\paragraph{Related work.} The results presented in this paper explain the results introduced in \cite{KarpowiczMFT} and extend them by introducing generalizations and showing practical applications.

Great and comprehensive references on the matrix factorization theory include Golub and Van Loan \cite{golub2013matrix}, Stewart \cite{stewart1998matrix}, Demmel \cite{demmel1997applied}, Trefethen \cite{trefethen1997numerical} and Strang \cite{strangIntroLA}. For more details on recent results introducing concepts of randomized linear algebra, see Nakatsukasa et al. \cite{nakatsukasa2020fast,nakatsukasa2021accurate}, Hamm et al. \cite{HAMM2023236,hamm2020perspectives,cai2021robust}, Martinsson and Tropp \cite{martinsson2019randomized,martinsson2020randomized} or Kannan and Vempala \cite{kannan2017randomized}.

Schneier and co-authors present a wonderful introduction to applied cryptography in  \cite{schneier2007applied,ferguson2011cryptography}. Menezes, Van Oorschot, and Vanstone give a comprehensive overview of mathematics involved in \cite{menezes2018handbook}, whereas Singh \cite{singh2000code} and Holden \cite{holden2017secrets} present an excellent historical perspective.

\section{Main result}

The following is the main result of this paper. 
It shows how to reconstruct a rank-k matrix from a product of matrices of possibly higher rank that are possibly rank-deficient. The representation depends on parameters that can be selected on-demand subject to rank-preserving conditions.

\begin{theorem*}[Theorem \ref{thm:MFT-LME} of Section~\ref{sec:Matrix decompositions}]
Consider rank-$k$ matrices $\mathbf{A}\in\mathbb{K}^{m\times n}$, $\mathbf{F}\in\mathbb{K}^{m\times r}$ and $\mathbf{H}\in\mathbb{K}^{m\times q}$, where $k\le\min\{m,n\}$, $k\le r$ and $k\le q$, such that $\mathcal{C}(\mathbf{A}) \subset \mathcal{C}(\mathbf{F})$ and $\mathcal{C}(\mathbf{A}^*) \subset \mathcal{C}(\mathbf{H})$. Also, suppose that $\mathbf{B}\in\mathbb{K}^{m\times r}$ and $\mathbf{D}\in\mathbb{K}^{n\times q}$ meet the following rank-preserving condition:
\begin{align}
\mathrm{rank}(\mathbf{B}^*\mathbf{F}) = \mathrm{rank}(\mathbf{H}^*\mathbf{D}) = k.
\end{align}

If 
\begin{align}
\mathbf{Y}^* = (\mathbf{B}^*\mathbf{F})^+\mathbf{B}^*
\quad\text{and}\quad
\mathbf{X} = \mathbf{D}(\mathbf{H}^*\mathbf{D})^+,
\end{align}
then $\mathbf{A}$ can be factorized in the form
\begin{align}
\mathbf{A} = \mathbf{F}\mathbf{G}\mathbf{H}^*,
\end{align}
where
\begin{align}
\mathbf{G} = 
\mathbf{Y}^*
\mathbf{A}
\mathbf{X}
+ 
\mathbf{W} - \mathbf{Y}^*\mathbf{F}\mathbf{W}\mathbf{H}^*\mathbf{X}
\end{align}
and $\mathbf{W}$ is arbitrary.
\end{theorem*}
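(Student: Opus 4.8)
The plan is to extract the two oblique projectors hidden in the hypotheses and to reduce the identity $\mathbf{A}=\mathbf{F}\mathbf{G}\mathbf{H}^*$ to a one-line cancellation. Put $\mathbf{P}:=\mathbf{F}\mathbf{Y}^*=\mathbf{F}(\mathbf{B}^*\mathbf{F})^+\mathbf{B}^*$ and $\mathbf{Q}:=\mathbf{X}\mathbf{H}^*=\mathbf{D}(\mathbf{H}^*\mathbf{D})^+\mathbf{H}^*$. The first step is to record that $\mathbf{P}$ and $\mathbf{Q}$ are idempotent: this is immediate from the Moore--Penrose identity $\mathbf{M}^+\mathbf{M}\mathbf{M}^+=\mathbf{M}^+$ applied to $\mathbf{M}=\mathbf{B}^*\mathbf{F}$ and to $\mathbf{M}=\mathbf{H}^*\mathbf{D}$, since, e.g., $\mathbf{P}^2=\mathbf{F}(\mathbf{B}^*\mathbf{F})^+(\mathbf{B}^*\mathbf{F})(\mathbf{B}^*\mathbf{F})^+\mathbf{B}^*=\mathbf{P}$.

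The crucial step, where the rank-preserving condition does all the work, is to show $\mathbf{P}\mathbf{F}=\mathbf{F}$ and $\mathbf{H}^*\mathbf{Q}=\mathbf{H}^*$. For the first, write $\mathbf{P}\mathbf{F}=\mathbf{F}\,(\mathbf{B}^*\mathbf{F})^+(\mathbf{B}^*\mathbf{F})$ and note that $\mathbf{I}-(\mathbf{B}^*\mathbf{F})^+(\mathbf{B}^*\mathbf{F})$ is the orthogonal projector onto $\mathcal{N}(\mathbf{B}^*\mathbf{F})$. One always has $\mathcal{N}(\mathbf{F})\subseteq\mathcal{N}(\mathbf{B}^*\mathbf{F})$, and $\mathrm{rank}(\mathbf{B}^*\mathbf{F})=k=\mathrm{rank}(\mathbf{F})$ forces these two null spaces to have the same dimension, hence to coincide; therefore $\mathbf{F}\bigl(\mathbf{I}-(\mathbf{B}^*\mathbf{F})^+(\mathbf{B}^*\mathbf{F})\bigr)=\mathbf{0}$, i.e.\ $\mathbf{P}\mathbf{F}=\mathbf{F}$. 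The identity $\mathbf{H}^*\mathbf{Q}=\mathbf{H}^*$ is the mirror statement: $\mathbf{H}^*\mathbf{Q}=(\mathbf{H}^*\mathbf{D})(\mathbf{H}^*\mathbf{D})^+\mathbf{H}^*$, and since $\mathcal{C}(\mathbf{H}^*\mathbf{D})\subseteq\mathcal{C}(\mathbf{H}^*)$ with both of rank $k$, the spaces agree, so $(\mathbf{H}^*\mathbf{D})(\mathbf{H}^*\mathbf{D})^+$ fixes $\mathbf{H}^*$. Finally, the inclusions $\mathcal{C}(\mathbf{A})\subseteq\mathcal{C}(\mathbf{F})$ and $\mathcal{C}(\mathbf{A}^*)\subseteq\mathcal{C}(\mathbf{H})$ let me write $\mathbf{A}=\mathbf{F}\mathbf{M}$ and $\mathbf{A}=\mathbf{N}^*\mathbf{H}^*$ for suitable $\mathbf{M},\mathbf{N}$, and combining with the previous identities yields $\mathbf{P}\mathbf{A}=\mathbf{A}$ and $\mathbf{A}\mathbf{Q}=\mathbf{A}$.

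With this in hand the last step is pure substitution. Expanding the definition of $\mathbf{G}$,
\[
\mathbf{F}\mathbf{G}\mathbf{H}^*
=\mathbf{F}\mathbf{Y}^*\mathbf{A}\mathbf{X}\mathbf{H}^*
+\mathbf{F}\mathbf{W}\mathbf{H}^*
-(\mathbf{F}\mathbf{Y}^*\mathbf{F})\,\mathbf{W}\,(\mathbf{H}^*\mathbf{X}\mathbf{H}^*)
=\mathbf{P}\mathbf{A}\mathbf{Q}+\mathbf{F}\mathbf{W}\mathbf{H}^*-(\mathbf{P}\mathbf{F})\,\mathbf{W}\,(\mathbf{H}^*\mathbf{Q}).
\]
Here $\mathbf{P}\mathbf{A}\mathbf{Q}=(\mathbf{P}\mathbf{A})\mathbf{Q}=\mathbf{A}\mathbf{Q}=\mathbf{A}$, while $(\mathbf{P}\mathbf{F})\,\mathbf{W}\,(\mathbf{H}^*\mathbf{Q})=\mathbf{F}\mathbf{W}\mathbf{H}^*$; the two copies of $\mathbf{F}\mathbf{W}\mathbf{H}^*$ cancel, so $\mathbf{F}\mathbf{G}\mathbf{H}^*=\mathbf{A}$ for every choice of $\mathbf{W}$, which is the claim.

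I expect the only genuine obstacle to be the rank/dimension bookkeeping in the middle step: one has to invoke the rank-preserving hypothesis at exactly the right moment to promote the generic inclusion $\mathcal{N}(\mathbf{F})\subseteq\mathcal{N}(\mathbf{B}^*\mathbf{F})$ (resp.\ $\mathcal{C}(\mathbf{H}^*\mathbf{D})\subseteq\mathcal{C}(\mathbf{H}^*)$) to an equality, since without it $\mathbf{P}$ would fix only a proper subspace of $\mathcal{C}(\mathbf{F})$ and the final cancellation would break. Everything else reduces to the two standard facts that $\mathbf{M}^+\mathbf{M}\mathbf{M}^+=\mathbf{M}^+$ and that $\mathbf{M}^+\mathbf{M}$, $\mathbf{M}\mathbf{M}^+$ are orthogonal projectors onto $\mathcal{C}(\mathbf{M}^*)$ and $\mathcal{C}(\mathbf{M})$ respectively.
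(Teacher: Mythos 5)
Your proof is correct and follows essentially the same route as the paper: establish the generalized-inverse identities $\mathbf{F}\mathbf{Y}^*\mathbf{F}=\mathbf{F}$ and $\mathbf{H}^*\mathbf{X}\mathbf{H}^*=\mathbf{H}^*$, use the column/row space inclusions to get $\mathbf{F}\mathbf{Y}^*\mathbf{A}\mathbf{X}\mathbf{H}^*=\mathbf{A}$, and let the homogeneous term cancel under substitution. The only difference is one of packaging: you prove the two identities in-line via the null-space/column-space dimension count, whereas the paper imports them as Theorem~\ref{Theorem:idempotence} from \cite{KarpowiczMFT} and routes the final step through Theorem~\ref{thm:Penrose}, so your version is self-contained but identical in substance.
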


In the following sections, I explain each statement of this result step by step, starting from elementary concepts and moving to more general ones. First, in Section~\ref{sec:One form to rule them all}, I introduce a reduced form of matrix factorization, a model to investigate all forms of matrix factorizations. Next, I study its internal structure in Sections~\ref{sec:The projection equation} and \ref{sec:Hidden projectors}. That study leads to the concept of projectors hidden within the structure of the reduced form. The projectors depend on two matrices, $\mathbf{Y}^*$ and $\mathbf{X}$, which can take the following general and convenient form:
\begin{align}
\mathbf{Y}^* = (\mathbf{B}^*\mathbf{F})^+\mathbf{B}^*
\quad\text{and}\quad
\mathbf{X} = \mathbf{D}(\mathbf{H}^*\mathbf{D})^+.
\end{align}
Finally, in Sections~\ref{sec:Matrix decompositions} I show how to design and use hidden projectors to reconstruct an arbitrary matrix. The design of encryption and decryption functions of a cryptographic algorithm is presented in Section~\ref{sec:Hiding secrets with hidden projectors}.

\section{One form to rule them all}
\label{sec:One form to rule them all}

We ask general questions about matrix factorizations, so we need a model to investigate all forms of matrix factorizations at once. Let us call it a general reduced form of matrix factorizations. It represents an $m$ by $n$ and rank-$k$ matrix $\mathbf{A}$ as a~product of three matrices:
\begin{align}\label{eq:similarity}
\setstackgap{L}{15pt}\def\stacktype{L}\def\sz{\scriptstyle}
\begin{aligned}
\stackunder{\mathbf{A}}{\sz (m\times n)} &= 
\stackunder{\mathbf{F}}{\sz (m\times k)} &&
\stackunder{\mathbf{G}}{\sz (k\times k)} &&
\stackunder{\mathbf{H}^*.}{\sz (k\times n)}
\end{aligned}
\end{align}
It is instructive to think of some examples, so let us first take a look at few famous ones.

\paragraph{Similarity transformations.} Two square matrices $\mathbf{A}$ and $\mathbf{B}$ are similar, if 
\begin{align}\label{eq:genfactorization}
\setstackgap{L}{15pt}\def\stacktype{L}\def\sz{\scriptstyle}
\begin{aligned}
\stackunder{\mathbf{A}}{\sz (k\times k)} &= 
\stackunder{\mathbf{M}}{\sz (k\times k)} &&
\stackunder{\mathbf{B}}{\sz (k\times k)} &&
\stackunder{\mathbf{M}^{-1}.}{\sz (k\times k)}
\end{aligned}
\end{align}
The equation holds, if we can find a~nonsingular matrix $\mathbf{M}$ to define the transformation above. For example, 
\begin{align}
\begin{aligned}
\mathbf{A} = 
\begin{bmatrix}
1 & 2\\ 3 & 4
\end{bmatrix}=
\begin{bmatrix}
0 & 1\\ 1 & 1
\end{bmatrix}
\begin{bmatrix}
2 & 4\\ 2 & 3
\end{bmatrix}
\begin{bmatrix}
-1 & 1\\ \phantom{-}1 & 0
\end{bmatrix}=
\mathbf{M}\mathbf{B}\mathbf{M}^{-1}.
\end{aligned}
\end{align}
Prominent similarity transformations include eigendecomposition, Schur decomposition and Jordan canonical form. Interpreted as a matrix factorizations, they match the reduced form with $m = n = k$, $\mathbf{F}=\mathbf{M}$, $\mathbf{G}=\mathbf{B}$, and $\mathbf{H}^*=\mathbf{M^{-1}}$. 

\paragraph{Reduced SVD.} The singular value decomposition (SVD) shows that every matrix $\mathbf{A}$ is isometric to the singular value matrix 
\begin{align}
\setstackgap{L}{15pt}\def\stacktype{L}\def\sz{\scriptstyle}
\begin{aligned}
\stackunder{\mathbf{S}}{\sz (m\times n)} = 
\stackunder{\mathbf{U}^*}{\sz (m\times m)} &&
\stackunder{\mathbf{A}}{\sz (m\times n)} &&
\stackunder{\mathbf{V}.}{\sz (n\times n)}
\end{aligned}
\end{align}
Matrices $\mathbf{U}$ and $\mathbf{V}$ are complex unitary, and $\mathbf{S}$ is a diagonal matrix with (real non-negative) singular values on the diagonal. The reduced version of the SVD factors a~rank-$k$ matrix, $k < \min\{m,n\}$, into the product:
\begin{align}
\setstackgap{L}{15pt}\def\stacktype{L}\def\sz{\scriptstyle}
\begin{aligned}
\stackunder{\mathbf{A}}{\sz (m\times n)} &=& 
\stackunder{\mathbf{U}(:,1{:}k)}{\sz (m\times k)} &&
\stackunder{\mathbf{S}(1{:}k,1{:}k)}{\sz (k\times k)} &&
\stackunder{\mathbf{V}(:,1{:}k)^*}{\sz (k\times n)}
= \mathbf{U}_k\mathbf{S}_k\mathbf{V}_k^*.
\end{aligned}
\end{align}

It is easy to see how it matches the general reduced form. 

The following example shows how a rectangular matrix $\mathbf{A}$ can be diagonalized with two unitary matrices:
\begin{align}
\begin{aligned}
\mathbf{A} = 
\begin{bmatrix}
\phantom{-}1 & \phantom{-}1 \\ 
\phantom{-}1 & -1 \\ 
\phantom{-}1 & \phantom{-}1
\end{bmatrix}=
\begin{bmatrix}
-\sqrt{2}/2 & \phantom{-}0\\ \phantom{-}0 & -1 \\ -\sqrt{2}/2 & \phantom{-}0
\end{bmatrix}
\begin{bmatrix}
2 & 0\\ 0 & \sqrt{2}
\end{bmatrix}
\begin{bmatrix}
-\sqrt{2}/2 & -\sqrt{2}/2\\ -\sqrt{2}/2 & \sqrt{2}/2
\end{bmatrix}=
\mathbf{U}_k\mathbf{S}_k\mathbf{V}_k^*
\end{aligned}
\end{align}

\paragraph{Column pivoted QR.} The column pivoted QR (CPQR) decomposition factors any rectangular matrix into the product of an orthogonal $m$ by $m$ matrix $\mathbf{Q}$, an upper triangular $m$ by $n$ matrix $\mathbf{R}$ and a rectangular $n$ by $n$ permutation matrix $\mathbf{\Pi}$. The thin CPQR decomposition of a~rank-$k$ matrix takes $k$ columns of $\mathbf{Q}$ and non-zero rows of $\mathbf{R}$, so that
\begin{align}
\setstackgap{L}{15pt}\def\stacktype{L}\def\sz{\scriptstyle}
\begin{aligned}
\stackunder{\mathbf{A}}{\sz (m\times n)} &=& 
\stackunder{\mathbf{Q}(:,1{:}k)}{\sz (m\times k)} &&
\stackunder{\mathbf{R}(1{:}k,:)}{\sz (k\times n)} &&
\stackunder{\mathbf{\Pi}^*.}{\sz (n\times n)}
\end{aligned}
\end{align}
We get the general reduced form with
\begin{align}
\begin{aligned}
\mathbf{F} = \mathbf{Q}(:,1{:}k)
\quad\text{and}\quad
\mathbf{G} = \mathbf{I}_k,
\quad\text{and}\quad
\mathbf{H}^* = \mathbf{R}(1{:}k,:)\mathbf{\Pi}^*,
\end{aligned}    
\end{align}
which is also illustrated below:
\begin{align}
\begin{aligned}
\mathbf{A} &= 
\begin{bmatrix}
\phantom{-}1 & \phantom{-}1 \\ 
\phantom{-}0 & -1 \\ 
\phantom{-}0 & \phantom{-}0
\end{bmatrix} =
\begin{bmatrix}
-\sqrt{2}/2 & \phantom{-}\sqrt{2}/2\\ \phantom{-}\sqrt{2}/2 & \phantom{-}\sqrt{2}/2 \\ 0 & 0
\end{bmatrix}
\begin{bmatrix}
1 & 0\\ 0 & 1
\end{bmatrix}
\begin{bmatrix}
-\sqrt{2} & -\sqrt{2}/2\\ 0 & \phantom{-}\sqrt{2}/2
\end{bmatrix}
\begin{bmatrix}
0 & 1\\ 1 & 0
\end{bmatrix} 
\\
&=
\mathbf{Q}(:,1{:}k)
\mathbf{I}_k
\left(
\mathbf{R}(1{:}k,:)\mathbf{\Pi}^*
\right).
\end{aligned}
\end{align}

\paragraph{LU decomposition.} The idea behind this famous decomposition is to transform a square matrix $\mathbf{A}$ into an upper-triangular matrix $\mathbf{U}$ by subtracting multiples of each row from subsequent rows. The operation eliminates entries below the diagonal and is equivalent to multiplying $\mathbf{A}$ by a~lower-triangular matrix $\mathbf{L}^{-1}$. However, to control the condition numbers of $\mathbf{L}$ and $\mathbf{U}$ and avoid zeros along the new diagonal, it is usually necessary to change the row ordering of $\mathbf{A}$. We can encode the row exchanges into a~permutation matrix $\mathbf{\Pi}$. These operations yield
\begin{align}
\begin{aligned}
\mathbf{A} = \mathbf{\Pi}^*\mathbf{L}\mathbf{U}.
\end{aligned}
\end{align}
We get the general reduced form for decomposition by setting $\mathbf{F}=\mathbf{\Pi}^*$, $\mathbf{G}=\mathbf{L}$, and $\mathbf{H}^*=\mathbf{U}$. The following example illustrates the idea: 
\begin{align}
\begin{aligned}
\mathbf{A} = 
\begin{bmatrix}
0 & 1 & 1\\ 1 & 2 & 1 \\2 & 7 & 9
\end{bmatrix}=
\begin{bmatrix}
0 & 0 & 1\\ 0 & 1 & 0\\ 1 & 0 & 0
\end{bmatrix}
\begin{bmatrix}
1 & 0 & 0\\ 1/2 & 1 & 0 \\ 0 & -2/3 & 1
\end{bmatrix} 
\begin{bmatrix}
2 & 7 & 9\\ 0 & -3/2 & -7/2 \\ 0 & 0 & -4/3
\end{bmatrix} =
\mathbf{\Pi}^*\mathbf{L}\mathbf{U}.
\end{aligned}
\end{align}

\paragraph{CUR decomposition.}
The CUR decomposition provides an approximation:
\begin{align}\label{eq:CUR}
\mathbf{A} \approx 
\mathbf{C}\mathbf{U}\mathbf{R},
\end{align}
where $\mathbf{C} = \mathbf{A}(:,J)$ is a subset of representative columns of $\mathbf{A}$, $\mathbf{R} = \mathbf{A}(I,:)$ is a subset of representative rows of $\mathbf{A}$, and the mixing matrix is $\mathbf{U}$. The properties of the approximation depend critically on the rank of $\mathbf{U}$ and the selection of columns and rows, $J$ and $I$, as demonstrated by Sorensen and Embree \cite{sorensen2016deim}, Wang and Zhang \cite{wang2013improving}, Martinsson and Tropp \cite{martinsson2019randomized}, Hamm and Huang \cite{hamm2020perspectives}. 

To obtain the general reduced form of the CUR decomposition, we can set $\mathbf{F}=\mathbf{C}$, $\mathbf{G}=\mathbf{C}^+\mathbf{A}\mathbf{R}^+$, and $\mathbf{H}^*=\mathbf{R}$. Then, if $\mathrm{rank}(\mathbf{U}) = \mathrm{rank}(\mathbf{A})$ given the selection of columns and rows, $J$ and $I$, we get the exact reconstruction:
\begin{align}
\begin{aligned}
\mathbf{A} = 
\begin{bmatrix}
0 & 1 & 1/2\\ 1 & 2 & 3/2 \\2 & 7 & 9/2
\end{bmatrix}
=
\begin{bmatrix}
0 & 1 \\ 1 & 2\\ 2 & 7
\end{bmatrix}
\begin{bmatrix}
-2 & 1 \\ \phantom{-}1 & 0
\end{bmatrix} 
\begin{bmatrix}
0 & 1 & 1/2\\ 1 & 2 & 3/2 
\end{bmatrix}
= \mathbf{C}\mathbf{U}\mathbf{R}.
\end{aligned}
\end{align}

\paragraph{Outer-product decomposition.}
Finally, consider the case of diagonal rank-$k$ mixing matrix $\mathbf{G} = \mathrm{diag}(g_1,\dots,g_k)$. The general reduced form of matrix factorization can now be written as the weighted sum of rank-$1$ matrices:
\begin{align}\label{eq:outer-product}
\mathbf{A}  =
\mathbf{F}\cdot \mathrm{diag}(g_1,\dots,g_k)\cdot\mathbf{H}^* 
=
\sum_{i=1}^k g_i \mathbf{f}_i\mathbf{h}_i^*.
\end{align}
That is the case for the SVD and CPQR decomposition, where  $\mathbf{G} = \mathbf{S}(1\colon k,1\colon k)$ and $\mathbf{G} = \mathbf{I}_k$ are diagonal matrices. That is also the case for any factorization $\mathbf{A} = \mathbf{B}\mathbf{D}$, which can be written as the sum of columns times rows:
\begin{align}
\mathbf{A}  =
\mathbf{B}\mathbf{D} 
=
\sum_{i=1}^k \mathbf{b}_i\mathbf{d}_i^*.
\end{align}
For more on the outer-product decomposition, see Wedderburn \cite{wedderburn1934lectures}, Egerv{\'a}ry \cite{egervary1960rank}, Householder \cite{householder1965theory}, Cline and Funderlic \cite{cline1979rank}.

We can learn much about matrix factorizations by studying their general reduced form. A good way to start is to understand the factors involved.

\section{The key in the middle}
\label{sec:The key in the middle}

Matrices $\mathbf{F}$ and $\mathbf{H}^*$ provide the building blocks for the reconstruction of $\mathbf{A}$. Matrix $\mathbf{F}$ is a basis for the column space of $\mathbf{A}$, because every column of $\mathbf{A}$ must be a linear combination of the columns of $\mathbf{F}$. Similarly, matrix $\mathbf{H}^*$ is a basis for the row space of $\mathbf{A}$ because every row of $\mathbf{A}$ must be a linear combination of the rows of $\mathbf{H}^*$. However, we need one more matrix to put all those blocks together. The missing factor is $\mathbf{G}$, the mixing matrix controlling the reconstruction process. It connects the columns of $\mathbf{F}$ with the rows of $\mathbf{H}^*$ and makes the reconstruction possible. 

To understand what $\mathbf{G}$ is, we will call the Moore-Penrose \cite{penrose1955generalized} pseudoinverses for help. The Moore-Penrose pseudoinverse of $\mathbf{F}$ is a unique solution to the following system of equations:
\begin{align}\label{eq:Penrose-eqs}
\begin{aligned}
\mathbf{F}\mathbf{F}^+\mathbf{F} = \mathbf{F}
\quad&\text{and}\quad
\mathbf{F}^+\mathbf{F}\mathbf{F}^+ = \mathbf{F}^+
\\
(\mathbf{F}\mathbf{F}^+)^* = \mathbf{F}\mathbf{F}^+
\quad&\text{and}\quad
(\mathbf{F}^+\mathbf{F})^* = \mathbf{F}^+\mathbf{F}.
\end{aligned}    
\end{align}
Note that the same equations hold for $(\mathbf{H}^*)^+$. 
The first two define the basic mechanics of inversion and are essential in our quest as we are about to see. They are known as generalized inverse conditions. The last two impose symmetry of matrix products. 

Suppose that 
\begin{align}
\begin{aligned}
\mathbf{F} = 
\begin{bmatrix}
1\\ 0\\1
\end{bmatrix}.
\end{aligned}
\end{align}
We can verify that:
\begin{align}
\begin{aligned}
\mathbf{F}^+=
\begin{bmatrix}
1/2 & 0 & 1/2
\end{bmatrix}
\end{aligned}
\end{align}
solves the Penrose's equations:
\begin{align}
\begin{aligned}
\mathbf{F}\mathbf{F}^+\mathbf{F} &=
\begin{bmatrix}
1\\ 0\\1
\end{bmatrix}
\begin{bmatrix}
1/2 & 0 & 1/2
\end{bmatrix}
\begin{bmatrix}
1\\ 0\\1
\end{bmatrix}=
\begin{bmatrix}
1\\ 0\\1
\end{bmatrix} = \mathbf{F},
\end{aligned}
\end{align}

\begin{align}
\begin{aligned}
\mathbf{F}^+\mathbf{F}\mathbf{F}^+ &=
\begin{bmatrix}
1/2 & 0 & 1/2
\end{bmatrix}
\begin{bmatrix}
1\\ 0\\1
\end{bmatrix}
\begin{bmatrix}
1/2 & 0 & 1/2
\end{bmatrix}
=
\begin{bmatrix}
1/2 & 0 & 1/2
\end{bmatrix} = \mathbf{F}^+,
\end{aligned}
\end{align}

\begin{align}
\begin{aligned}
(\mathbf{F}\mathbf{F}^+)^* &=
\begin{bmatrix}
1/2 & 0 & 1/2\\
0 & 0 & 0\\
1/2 & 0 & 1/2
\end{bmatrix} = \mathbf{F}\mathbf{F}^+
\quad\mathrm{and}\quad
(\mathbf{F}^+\mathbf{F})^* &=
\begin{bmatrix}
1/2 & 0 & 1/2
\end{bmatrix}
\begin{bmatrix}
1\\0\\1
\end{bmatrix} = 1 = \mathbf{F}^+\mathbf{F}.
\end{aligned}
\end{align}
Provided that all the necessary assumptions about matrices involved are made, multiplying the general reduced form by $\mathbf{F}^{+}$ and $(\mathbf{H}^*)^{+}$ results in:
\begin{align}\label{eq:Gfactorization}
\mathbf{G} = \mathbf{F}^{+} \mathbf{A} (\mathbf{H}^*)^+.
\end{align}
However, that is a shortcut we do not want to take. Instead, let us then assume that:
\begin{align}\label{eq:geniotransform}
\setstackgap{L}{15pt}\def\stacktype{L}\def\sz{\scriptstyle}
\begin{aligned}
\stackunder{\mathbf{G}}{\sz (k\times k)} &=  
\stackunder{\mathbf{Y}^*}{\sz (k\times m)}&& 
\stackunder{\mathbf{A}}{\sz (m\times n)} &&
\stackunder{\mathbf{X}.}{\sz (n\times k)}
\end{aligned}
\end{align}
We do not know what $\mathbf{Y}^*$ and $\mathbf{X}$ are, but now we can ask questions about them. Answering these questions will provide us with the desired insights into the mechanics of interactions between the factorizations' factors.

\section{Hidden projectors}
\label{sec:Hidden projectors}

Good science is about asking good questions, which is what we are about to do. We want to ask good questions about $\mathbf{Y}^*$ and $\mathbf{X}$. For that, a mighty trick would come in handy. 

Let us substitute the formula for the mixing matrix into the general reduced form of matrix factorizations. What we get as a result is the key equation of this paper. The following equation unravels what holds for every matrix factorization:
\begin{align}\label{eq:motivation}
\mathbf{A} =
(\mathbf{F} \mathbf{Y}^*) 
\mathbf{A} 
(\mathbf{X} \mathbf{H}^*).
\end{align}
To better understand that feature, see that matrix $\mathbf{A}$ is present both on the left-hand and the right-hand side of the equation. That fact makes the two matrices in the brackets, $\mathbf{F} \mathbf{Y}^*$ and $\mathbf{X} \mathbf{H}^*$, very special. For the equation to work, both matrices must behave like projectors. Every matrix factorization defines two hidden projectors, $\mathbf{F} \mathbf{Y}^*$ for the column space and $\mathbf{X} \mathbf{H}^*$ for the row space of the original matrix. They project the original matrix onto its own column space and row space. That sheds new light on how matrix factorizations work. 

The next question is, when do matrices $\mathbf{F} \mathbf{Y}^*$ and $\mathbf{X} \mathbf{H}^*$ behave like projectors? 

\section{The projection equation, its solutions and beyond}
\label{sec:The projection equation}

There is no royal way to learn how projectors work, so let us dive into the details. Consider a space in which every vector is a linear combination (i.e., a weighted sum) of the columns of $\mathbf{F}$. Suppose also that vector $\mathbf{b}$ cannot be represented by any such combination. Still, we would like to find $\mathbf{p} = \mathbf{F}\mathbf{g}$, a~linear combination of the columns of $\mathbf{F}$, that is as close to $\mathbf{b}$ as possible. 

To do that, we can minimize the length of the error vector $\mathbf{e} = \mathbf{b}-\mathbf{p}$. By doing so, we will find the vector $\mathbf{p}= \mathbf{F}\mathbf{g}$ we are looking for. The error vector has minimal length when it is perpendicular to every column of $\mathbf{F}$, which means it solves the following equation:
\begin{align}
\mathbf{F}^T\mathbf{e} = \mathbf{0}.    
\end{align}
If the columns of $\mathbf{F}$ are linearly independent, then the solution to the equation above implies that:
\begin{align}
\mathbf{g} = (\mathbf{F}^T\mathbf{F})^{-1}\mathbf{F}^T\mathbf{b}.    
\end{align}
Otherwise, if $\mathbf{F}$ contains linearly dependent columns, then the general solution is:
\begin{align}
\mathbf{g} = \mathbf{F}^+\mathbf{b}.    
\end{align}
Multiplying by $\mathbf{F}$ gives us the vector we are looking for, namely,
\begin{align}
\mathbf{p} = \mathbf{F}\mathbf{g} = \mathbf{F}\mathbf{F}^+\mathbf{b}.    
\end{align}
This particular linear combination of the columns of $\mathbf{F}$ minimizes the distance between $\mathbf{b}$ and $\mathbf{p}$. 

The matrix on the right-hand side of the equation, $\mathbf{F}\mathbf{F}^+$, is known as the projection matrix. It projects $\mathbf{b}$ onto the space spanned by the columns of $\mathbf{F}$ producing vector $\mathbf{p}$. The projection is orthogonal, which means that $\mathbf{e} =  \mathbf{b} - \mathbf{p}$ minimizes the mean square projection error.

The projection matrix has the following defining property:
\begin{align}
(\mathbf{F}\mathbf{F}^+)^2 = \mathbf{F}\mathbf{F}^+\mathbf{F}\mathbf{F}^+ =\mathbf{F}\mathbf{F}^+.
\end{align}
That should look familiar because we have already seen a similar matrix equation. The equation above holds because of the basic inversion mechanics defining the Moore-Penrose pseudoinverse. The very same property defines the projection matrix. In fact, that is the property we must exploit to show when matrices $\mathbf{F} \mathbf{Y}^*$ and $\mathbf{X} \mathbf{H}^*$ behave like projectors.

Indeed, we want them to be \textit{idempotent}, which means that the following conditions hold:
\begin{align}
(\mathbf{F}\mathbf{Y}^*)^2 = \mathbf{F}(\mathbf{Y}^*\mathbf{F})\mathbf{Y}^* =\mathbf{F}\mathbf{Y}^*
\quad\text{and}\quad
(\mathbf{X}\mathbf{H}^*)^2 = \mathbf{X}(\mathbf{H}^*\mathbf{X})\mathbf{H}^*=\mathbf{X}\mathbf{H}^*.
\end{align}
It is straightforward to see that it happens if
\begin{align}\label{eq:projector-eq}
\mathbf{Y}^*\mathbf{F} = \mathbf{I}_k = \mathbf{H}^*\mathbf{X}.
\end{align}
We can call that the \textit{projector equation}. It holds if and only if $\mathbf{F} \mathbf{Y}^*$ and $\mathbf{X} \mathbf{H}^*$ are idempotent rank-$k$ matrices, which means that they behave like (oblique) projectors (see Langenhop \cite{langenhop1967generalized}, and Karpowicz \cite{KarpowiczMFT} for more details). That is the answer to the question about the factorization mechanics we are looking for. The projector equation is the sufficient condition. Matrices $\mathbf{F} \mathbf{Y}^*$ and $\mathbf{X} \mathbf{H}^*$ behave like projectors if the projector equation holds. 

The projector equation is very useful. It tells us much about the solutions it admits. These solutions can take the following form:
\begin{align}\label{eq:YX-projector}
\mathbf{Y}^* = (\mathbf{B}^*\mathbf{F})^+\mathbf{B}^*
\quad\text{and}\quad
\mathbf{X} = \mathbf{D}(\mathbf{H}^*\mathbf{D})^+.
\end{align}
However, matrices $\mathbf{B}$ and $\mathbf{D}$ must be rank-preserving,~i.e., $\mathrm{rank}(\mathbf{B}^*\mathbf{F}) = k = \mathrm{rank}(\mathbf{H}^*\mathbf{D})$ for rank-$k$ $\mathbf{F}$ and $\mathbf{H}$. 

In the case of full-rank matrices, it is easy to see that:
\begin{align}
(\mathbf{B}^*\mathbf{F})^+\mathbf{B}^*\mathbf{F}
= \mathbf{I}_k = \mathbf{H}^*\mathbf{D}(\mathbf{H}^*\mathbf{D})^+.
\end{align}
Consider the following example again:
\begin{align}
\begin{aligned}
\mathbf{A} = 
\begin{bmatrix}
0 & 1 & 1/2\\ 1 & 2 & 3/2 \\2 & 7 & 9/2
\end{bmatrix},\quad\text{where}\quad\mathrm{rank}(\mathbf{A}) = 2.
\end{aligned}
\end{align}
Suppose that:
\begin{align}
\begin{aligned}
\mathbf{F} = 
\begin{bmatrix}
0 & 1 \\ 1 & 2 \\2 & 7
\end{bmatrix}
\quad\text{and}\quad
\mathbf{H}^* =
\begin{bmatrix}
0 & 1 & 1/2\\ 1 & 2 & 3/2 
\end{bmatrix},
\end{aligned}
\end{align}
and consider randomly selected:
\begin{align}
\begin{aligned}
\mathbf{B} = 
\begin{bmatrix}
1 & 0 \\ 0 & 1 \\1 & 1
\end{bmatrix}
\quad\text{and}\quad
\mathbf{D} =
\begin{bmatrix}
1 & 0\\0 & 1\\ 0 & 1 
\end{bmatrix}.
\end{aligned}
\end{align}
It is easy to verify that $\mathrm{rank}(\mathbf{B}^*\mathbf{F}) = \mathrm{rank}(\mathbf{H}^*\mathbf{D}) = k = 2$, so $\mathbf{B}$ and $\mathbf{D}$ are rank-preserving and yield:
\begin{align}
\begin{aligned}
\mathbf{Y}^* &= (\mathbf{B}^*\mathbf{F})^+\mathbf{B}^* =
\begin{bmatrix}
-3/2 & \phantom{-}4/3\\ \phantom{-}1/2 & -1/3
\end{bmatrix}
\begin{bmatrix}
1 & 0 & 1\\ 0 & 1 & 1
\end{bmatrix} = 
\begin{bmatrix}
-3/2 & \phantom{-}4/3 & -1/6\\\phantom{-}1/2 & -1/3 & \phantom{-}1/6
\end{bmatrix},
\\
\mathbf{X} &= \mathbf{D}(\mathbf{H}^*\mathbf{D})^+ =
\begin{bmatrix}
1 & 0 \\ 0 & 1\\ 0 & 1 
\end{bmatrix}
\begin{bmatrix}
-7/3 & 1\\ \phantom{-}2/3 & 0
\end{bmatrix}
=
\begin{bmatrix}
-7/3 & 1\\ \phantom{-}2/3 & 0 \\ \phantom{-}2/3 &0
\end{bmatrix}.
\end{aligned}
\end{align}
As a result, we have:
\begin{align}
\begin{aligned}
\mathbf{Y}^*\mathbf{F} = \mathbf{H}^*\mathbf{X} = 
\begin{bmatrix}
1 & 0\\ 0 & 1
\end{bmatrix},
\end{aligned}
\end{align}
so the projector equation holds and $\mathbf{F}\mathbf{Y}^*$ and $\mathbf{X}\mathbf{H}^*$ are idempotent matrices.

Let us now address the general case in which rank-$k$ matrices $\mathbf{F}$, $\mathbf{H}$, $\mathbf{B}^*\mathbf{F}$ and $\mathbf{H}^*\mathbf{D}$ are not necessarily full-rank.  
Can $\mathbf{F}\mathbf{Y}^*$ and $\mathbf{X}\mathbf{H}^*$ still be idempotent and behave like projectors? The following result shows that the answer is yes, if $\mathrm{rank}(\mathbf{B}^*\mathbf{F}) = \mathrm{rank}(\mathbf{H}^*\mathbf{D}) = k$. Furthermore, $\mathbf{Y}^*$ and $\mathbf{X}$ preserve the key property of generalized inverses (see also Hamm \cite{HAMM2023236} for recent results involving important special cases of matrix decompositions).

\begin{theorem}[Karpowicz \cite{KarpowiczMFT}]\label{Theorem:idempotence}
Suppose that 
\begin{align}
\mathbf{Y}^* = (\mathbf{B}^*\mathbf{F})^+\mathbf{B}^*
\quad\text{and}\quad
\mathbf{X} = \mathbf{D}(\mathbf{H}^*\mathbf{D})^+,
\end{align}
where $\mathbf{B}$ and $\mathbf{D}$ satisfy the following rank-preserving condition: 
\begin{align}
\mathrm{rank}(\mathbf{B}^*\mathbf{F}) = \mathrm{rank}(\mathbf{H}^*\mathbf{D}) = \mathrm{rank}(\mathbf{F}) =\mathrm{rank}(\mathbf{H}) = k.
\end{align}
Then $\mathbf{Y}^*$ and $\mathbf{X}$ are generalized inverses,~i.e.,
\begin{align}
\mathbf{F}\mathbf{Y}^*\mathbf{F} = \mathbf{F}
\quad\text{and}\quad
\mathbf{H}^*\mathbf{X}\mathbf{H}^* = \mathbf{H}^*.
\end{align}
\end{theorem}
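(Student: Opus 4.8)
The plan is to establish the two generalized-inverse identities separately and symmetrically; I will describe the argument for $\mathbf{F}\mathbf{Y}^*\mathbf{F} = \mathbf{F}$, since the identity $\mathbf{H}^*\mathbf{X}\mathbf{H}^* = \mathbf{H}^*$ follows by an entirely analogous computation (or, if one prefers, by applying the first identity to the transposed data $\mathbf{H}$, $\mathbf{D}$ in place of $\mathbf{F}$, $\mathbf{B}$). Substituting the definition gives $\mathbf{F}\mathbf{Y}^*\mathbf{F} = \mathbf{F}(\mathbf{B}^*\mathbf{F})^+\mathbf{B}^*\mathbf{F}$, so everything reduces to understanding the matrix $(\mathbf{B}^*\mathbf{F})^+\mathbf{B}^*\mathbf{F}$, which is the orthogonal projector onto the row space of $\mathbf{B}^*\mathbf{F}$ (equivalently, onto $\mathcal{C}((\mathbf{B}^*\mathbf{F})^*)$). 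The crux of the proof is therefore the claim that, under the rank-preserving hypothesis, left-multiplication of this projector by $\mathbf{F}$ recovers $\mathbf{F}$.

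The key step I expect to carry the argument is a \emph{rank/kernel comparison}. Let $P = (\mathbf{B}^*\mathbf{F})^+(\mathbf{B}^*\mathbf{F})$. Then $\mathbf{F}P = \mathbf{F}$ is equivalent to $\mathbf{F}(\mathbf{I}_k - P) = \mathbf{0}$, i.e. to the inclusion $\mathcal{C}(\mathbf{I}_k - P) \subseteq \ker(\mathbf{F})$. Now $\mathbf{I}_k - P$ is the orthogonal projector onto $\ker(\mathbf{B}^*\mathbf{F})$, so this inclusion reads $\ker(\mathbf{B}^*\mathbf{F}) \subseteq \ker(\mathbf{F})$. The reverse inclusion $\ker(\mathbf{F}) \subseteq \ker(\mathbf{B}^*\mathbf{F})$ is trivial, so in fact one must show $\ker(\mathbf{B}^*\mathbf{F}) = \ker(\mathbf{F})$. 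But by rank–nullity both kernels live in $\mathbb{K}^k$, and $\dim\ker(\mathbf{F}) = k - \mathrm{rank}(\mathbf{F}) = k - k = 0$ while $\dim\ker(\mathbf{B}^*\mathbf{F}) = k - \mathrm{rank}(\mathbf{B}^*\mathbf{F}) = k - k = 0$ by hypothesis; hence both kernels are $\{\mathbf{0}\}$ and the inclusion holds trivially. In other words, when $\mathbf{F}$ has full column rank $k$, the map $\mathbf{x}\mapsto\mathbf{F}\mathbf{x}$ is injective, so $\mathbf{F}\mathbf{x} = \mathbf{F}P\mathbf{x}$ would follow the moment we knew $\mathbf{F}$ has a left inverse — and $\mathbf{Y}^*$ turns out to be exactly such a left inverse.

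This suggests the cleanest route: first prove the sharper fact $\mathbf{Y}^*\mathbf{F} = \mathbf{I}_k$, i.e. that $\mathbf{Y}^*$ is a genuine left inverse of $\mathbf{F}$ (not merely a generalized inverse), from which $\mathbf{F}\mathbf{Y}^*\mathbf{F} = \mathbf{F}\mathbf{I}_k = \mathbf{F}$ is immediate. To see $\mathbf{Y}^*\mathbf{F} = (\mathbf{B}^*\mathbf{F})^+(\mathbf{B}^*\mathbf{F}) = \mathbf{I}_k$, note that $(\mathbf{B}^*\mathbf{F})^+(\mathbf{B}^*\mathbf{F})$ is the orthogonal projector onto $\mathcal{C}((\mathbf{B}^*\mathbf{F})^*) \subseteq \mathbb{K}^k$, a subspace of dimension $\mathrm{rank}(\mathbf{B}^*\mathbf{F}) = k$, hence equal to all of $\mathbb{K}^k$, so the projector is $\mathbf{I}_k$. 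The only subtlety is that this recovers the projector equation~\eqref{eq:projector-eq} in the full-rank-$\mathbf{B}^*\mathbf{F}$ regime — and indeed the hypothesis $\mathrm{rank}(\mathbf{B}^*\mathbf{F}) = k$ is precisely what makes $\mathbf{B}^*\mathbf{F}$ a right-invertible (in fact square-invertible, since it is $k\times k$... unless $\mathbf{B}$ has more than $k$ columns, in which case it is merely full column rank) matrix whose pseudoinverse acts as a one-sided inverse. The potential obstacle, and the point to handle carefully, is bookkeeping the shapes: $\mathbf{B}^*\mathbf{F}$ need not be square (it is $r\times k$ if $\mathbf{B}$ is $m\times r$... wait, the theorem's $\mathbf{B}$ is $m\times r$ so $\mathbf{B}^*\mathbf{F}$ is $r\times k$), so the identity $(\mathbf{B}^*\mathbf{F})^+(\mathbf{B}^*\mathbf{F}) = \mathbf{I}_k$ relies on full \emph{column} rank, whereas $\mathbf{H}^*\mathbf{D}$ is $k\times q$ and the companion identity $(\mathbf{H}^*\mathbf{D})(\mathbf{H}^*\mathbf{D})^+ = \mathbf{I}_k$ relies on full \emph{row} rank; keeping these two dualities straight, and invoking the third and fourth Penrose equations only where symmetry of the relevant product is actually needed, is the main care-point. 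Everything else is a two-line substitution.
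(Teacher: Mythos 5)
There is a genuine gap: your final argument silently assumes that $\mathbf{F}$ has exactly $k$ columns, i.e.\ full column rank, and that is precisely the case this theorem is \emph{not} about. In the paper's setting $\mathbf{F}\in\mathbb{K}^{m\times r}$ with $r\ge k$ and $\mathrm{rank}(\mathbf{F})=k$, so $\mathbf{F}$ may be rank-deficient; the text immediately preceding the theorem dispatches the full-rank case (where indeed $(\mathbf{B}^*\mathbf{F})^+\mathbf{B}^*\mathbf{F}=\mathbf{I}_k$) as easy, and states the theorem specifically to cover the general case. Your ``cleanest route'' --- proving $\mathbf{Y}^*\mathbf{F}=\mathbf{I}_k$ --- is then false: $\mathbf{B}^*\mathbf{F}$ is $r\times r$ of rank $k<r$, so $(\mathbf{B}^*\mathbf{F})^+(\mathbf{B}^*\mathbf{F})$ is a rank-$k$ orthogonal projector in $\mathbb{K}^{r\times r}$, not the identity. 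The paper's own worked example ($\mathbf{F}$ of size $2\times 3$ and rank $1$) computes $\mathbf{Y}^*\mathbf{F}$ explicitly as a non-identity rank-one idempotent while $\mathbf{F}\mathbf{Y}^*\mathbf{F}=\mathbf{F}$ still holds. Likewise your rank--nullity count $\dim\ker(\mathbf{F})=k-k=0$ uses the wrong ambient dimension; it should be $r-k$, which need not vanish. Your shape bookkeeping at the end considers $\mathbf{B}$ having more than $k$ columns but never $\mathbf{F}$ itself, which is where the real content lies.

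The good news is that your first idea, the kernel comparison, is the right one and survives the correction. Both $\ker(\mathbf{F})$ and $\ker(\mathbf{B}^*\mathbf{F})$ are subspaces of $\mathbb{K}^r$; the inclusion $\ker(\mathbf{F})\subseteq\ker(\mathbf{B}^*\mathbf{F})$ is trivial, and the rank-preserving hypothesis forces both to have dimension $r-k$, hence they are equal. Therefore $(\mathbf{B}^*\mathbf{F})^+(\mathbf{B}^*\mathbf{F})$, being the orthogonal projector onto $\ker(\mathbf{B}^*\mathbf{F})^{\perp}=\ker(\mathbf{F})^{\perp}$, satisfies
\begin{align}
\mathbf{F}\bigl(\mathbf{I}_r-(\mathbf{B}^*\mathbf{F})^+(\mathbf{B}^*\mathbf{F})\bigr)=\mathbf{0},
\end{align}
which is exactly $\mathbf{F}\mathbf{Y}^*\mathbf{F}=\mathbf{F}$; the dual computation with $\ker$ replaced by the appropriate row-space statement gives $\mathbf{H}^*\mathbf{X}\mathbf{H}^*=\mathbf{H}^*$. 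So the repair is to stop at the inclusion $\mathcal{C}(\mathbf{I}-P)\subseteq\ker(\mathbf{F})$ and prove exactly that, rather than upgrading to the stronger and generally false claim $\mathbf{Y}^*\mathbf{F}=\mathbf{I}_k$, which holds only in the full-column-rank regime already handled before the theorem.
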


We can illustrate that with the following example. Consider:
\begin{align}
\begin{aligned}
\mathbf{F} = \mathbf{H}^* = 
\begin{bmatrix}
0 & 1 & 1/2\\ 0 & 2 & 1
\end{bmatrix}
\end{aligned}
\end{align}
and:
\begin{align}
\begin{aligned}
\mathbf{B} = 
\begin{bmatrix}
1 & 0 & 1\\ 2 & 0 & 2
\end{bmatrix}
\quad\text{and}\quad
\mathbf{D} =
\begin{bmatrix}
1 & 1\\0 & 0\\ 1 & 1
\end{bmatrix}.
\end{aligned}
\end{align}
We have $\mathrm{rank}(\mathbf{F}) =\mathrm{rank}(\mathbf{H}^*)=\mathrm{rank}(\mathbf{B}^*\mathbf{F}) = \mathrm{rank}(\mathbf{H}^*\mathbf{D}) = k = 1$ and the matrices are rank-deficient. However:
\begin{align}
\begin{aligned}
\mathbf{Y}^* &= (\mathbf{B}^*\mathbf{F})^+\mathbf{B}^* =
\begin{bmatrix}
0    & 0 & 0       \\
2/25 & 0 & 2/25    \\
1/25 & 0 & 1/25   
\end{bmatrix}
\begin{bmatrix}
1 & 2 \\
0 & 0 \\    
1 & 2    
\end{bmatrix} = 
\begin{bmatrix}
0    &  0    \\   
4/25 &  8/25 \\   
2/25 &  4/25
\end{bmatrix}
\end{aligned}
\end{align}
and
\begin{align}
\begin{aligned}
\mathbf{X} &= \mathbf{D}(\mathbf{H}^*\mathbf{D})^+ =
\begin{bmatrix}
1 & 1 \\       
0 & 0 \\      
1 & 1 
\end{bmatrix}
\begin{bmatrix}
1/5 & 2/5 \\     
1/5 & 2/5  
\end{bmatrix}
=
\begin{bmatrix}
2/5 & 4/5 \\     
0   & 0   \\  
2/5 & 4/5 
\end{bmatrix}
\end{aligned}
\end{align}
are generalized inverses. Indeed, we have
\begin{align}
\begin{aligned}
\mathbf{Y}^*\mathbf{F} =  
\begin{bmatrix}
0 & 0   &  0   \\  
0 & 4/5 &  2/5 \\  
0 & 2/5 &  1/5 
\end{bmatrix}
\quad\mathrm{and}\quad
\mathbf{H}^*\mathbf{X} =
\begin{bmatrix}
1/5 & 2/5 \\     
2/5 & 4/5    
\end{bmatrix},
\end{aligned}
\end{align}
and, as it is easy to calculate, 
\begin{align}
\begin{aligned}
\mathbf{F}\mathbf{Y}^*\mathbf{F}
&=
\begin{bmatrix}
0 & 1 & 1/2\\ 0 & 2 & 1
\end{bmatrix}
\begin{bmatrix}
0 & 0   &  0   \\  
0 & 4/5 &  2/5 \\  
0 & 2/5 &  1/5 
\end{bmatrix} = 
\begin{bmatrix}
0 & 1 & 1/2\\ 0 & 2 & 1
\end{bmatrix}
=
\mathbf{F},
\\
\mathbf{H}^*\mathbf{X}\mathbf{H}^*
&=
\begin{bmatrix}
1/5 & 2/5 \\     
2/5 & 4/5    
\end{bmatrix}
\begin{bmatrix}
0 & 1 & 1/2\\ 0 & 2 & 1
\end{bmatrix}
=
\begin{bmatrix}
0 & 1 & 1/2\\ 0 & 2 & 1
\end{bmatrix}
=
\mathbf{H}^*.
\end{aligned}
\end{align}
Also, the idempotence condition holds,
\begin{align}
\begin{aligned}
\mathbf{F}(\mathbf{Y}^*\mathbf{F})\mathbf{Y}^*=\mathbf{F}\mathbf{Y}^*
\quad\mathrm{and}\quad
\mathbf{X}(\mathbf{H}^*\mathbf{X})\mathbf{H}^*=\mathbf{X}\mathbf{H}^*,
\end{aligned}
\end{align}
as required.

Let us summarize what we have established so far. 
Matrix factorizations of the form
\begin{align}
\mathbf{A} = \mathbf{F}\mathbf{G}\mathbf{H}^*
\end{align}
use factors $\mathbf{F}$ and $\mathbf{H}^*$ to define two projectors, one for the column space and one for the row space of $\mathbf{A}$. For rank-preserving $\mathbf{B}$ and $\mathbf{D}$, the projectors can take the form of: 
\begin{align}
\mathbf{F}(\mathbf{B}^*\mathbf{F})^+\mathbf{B}^*
\quad\mathrm{and}\quad
\mathbf{D}(\mathbf{H}^*\mathbf{D})^+\mathbf{H}^*.
\end{align}
The projectors are not orthogonal, they are oblique and depend on parameters that can be selected subject to the rank-preserving conditions.

\section{Matrix decomposition as a solution of linear matrix equation}
\label{sec:Matrix decompositions}

The insights above suggest a relationship between the concept of matrix factorization and linear matrix equation. We want to understand this relationship fully.

Consider the following linear equation:
\begin{align}
\mathbf{F}\mathbf{g} = \mathbf{a}.
\end{align}
It is satisfied when vector $\mathbf{a}$ is a linear combination of the columns of $\mathbf{F}$. Otherwise, if $\mathbf{a}$ were unreachable for any linear combination of the columns of $\mathbf{F}$, we would face the projection problem addressed earlier. But what if we have been facing the projection problem all the time? Let us take a closer look at the well-known general solution to the system of linear equations. 

The general solution is a sum of a particular solution and a solution to the homogeneous equation, $\mathbf{F}\mathbf{g} = \mathbf{0}$. Given an arbitrary vector $\mathbf{y}$,  we can design it as: 
\begin{align}
\mathbf{g} = \mathbf{F}^+\mathbf{a} + 
(\mathbf{I} - \mathbf{F}^+\mathbf{F})\mathbf{y}.
\end{align}
Indeed, multiplying by $\mathbf{F}$ gives us the vector we are looking for,
\begin{align}
\mathbf{F}\mathbf{g} = \mathbf{F}\mathbf{F}^+\mathbf{a} + (\mathbf{F}-\mathbf{F}\mathbf{F}^+\mathbf{F})\mathbf{y}
= \mathbf{F}\mathbf{F}^+\mathbf{a} = \mathbf{a}.
\end{align}
Notice that we have seen all those components before! The general solution projects the right-hand side vector $\mathbf{a}$ onto the column space of $\mathbf{F}$. It works because of the properties of the projection matrix. We have designed the solution vector $\mathbf{g}$ with the factors of projections reconstructing $\mathbf{a}$ from the columns of $\mathbf{F}$. It takes the projection matrix $\mathbf{F}\mathbf{F}^+$ to reconstruct $\mathbf{a}$ in the column space to which it belongs. The same property of the projection matrix makes the solution to a homogeneous equation, $(\mathbf{I} - \mathbf{F}^+\mathbf{F})\mathbf{y}$, disappear when multiplied by~$\mathbf{F}$. 

Solving linear equations is about projecting vectors onto the right subspaces. Matrix factorization is about projecting columns and rows onto the fundamental subspaces given by the original matrix. The first one is the special case of the latter. That is the relationship between linear equations and matrix factorizations.

We can now connect the dots and go one step further. Consider:
\begin{align}
\mathbf{g} = \mathbf{Y}^*\mathbf{a} + (\mathbf{I}-\mathbf{Y}^*\mathbf{F})\mathbf{w}.
\end{align}
If $\mathbf{F}\mathbf{Y}^*$ is the column space projector for $\mathbf{A}$, then multiplying $\mathbf{g}$ by $\mathbf{F}$ we get
\begin{align}
\mathbf{F}\mathbf{g} = \mathbf{F}\mathbf{Y}^*\mathbf{a} + (\mathbf{F}-\mathbf{F}\mathbf{Y}^*\mathbf{F})\mathbf{w}
= \mathbf{F}\mathbf{Y}^*\mathbf{a} = \mathbf{a}.
\end{align}
That happens when the projector equation holds, or $\mathbf{Y}^*$ is generalized inverse. 

That brings us to the conclusion and the theorem below. Matrix factorizations are solutions to linear matrix equations. The proof is constructive and follows the reasoning presented by Penrose in \cite{penrose1955generalized} while applying the properties of generalized inverses given by Theorem~\ref{Theorem:idempotence} (see also see Ben-Israel and Greville \cite{ben2003generalized}).  

\begin{theorem}[Penrose \cite{penrose1955generalized}]
\label{thm:Penrose}
A~necessary and sufficient condition for the equation
\begin{align}\label{eq:PenroseLME-oblique}
\mathbf{F}\mathbf{G}\mathbf{H}^* = \mathbf{A}
\end{align}
to have a solution $\mathbf{G}$ is
\begin{align}\label{eq:PenroseNSC-oblique}
\mathbf{F}\mathbf{Y}^*
\mathbf{A}
\mathbf{X}\mathbf{H}^* = \mathbf{A},
\end{align}
where $\mathbf{Y}^*$ and $\mathbf{X}$ are generalized inverses. The general solution is
\begin{align}\label{eq:PenroseProj-oblique}
\mathbf{G} = 
\mathbf{Y}^*
\mathbf{A}
\mathbf{X}
+ 
\mathbf{W} - \mathbf{Y}^*\mathbf{F}\mathbf{W}\mathbf{H}^*\mathbf{X},
\end{align}
where $\mathbf{W}$ is arbitrary.
\end{theorem}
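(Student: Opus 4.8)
The plan is to prove the equivalence of the solvability condition and then verify that the stated formula gives the general solution. First I would establish the chain of implications $\eqref{eq:PenroseLME-oblique} \Rightarrow \eqref{eq:PenroseNSC-oblique} \Rightarrow \eqref{eq:PenroseProj-oblique}$ solves $\eqref{eq:PenroseLME-oblique}$, which closes the loop. For the first implication, suppose $\mathbf{G}$ exists with $\mathbf{F}\mathbf{G}\mathbf{H}^* = \mathbf{A}$. Then I would left-multiply by $\mathbf{F}\mathbf{Y}^*$ and right-multiply by $\mathbf{X}\mathbf{H}^*$ and use the generalized-inverse identities $\mathbf{F}\mathbf{Y}^*\mathbf{F} = \mathbf{F}$ and $\mathbf{H}^*\mathbf{X}\mathbf{H}^* = \mathbf{H}^*$ from Theorem~\ref{Theorem:idempotence} to collapse $\mathbf{F}\mathbf{Y}^*(\mathbf{F}\mathbf{G}\mathbf{H}^*)\mathbf{X}\mathbf{H}^*$ back to $\mathbf{F}\mathbf{G}\mathbf{H}^* = \mathbf{A}$, which is exactly $\eqref{eq:PenroseNSC-oblique}$.

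Next, assuming $\eqref{eq:PenroseNSC-oblique}$ holds, I would simply exhibit the particular solution $\mathbf{G}_0 = \mathbf{Y}^*\mathbf{A}\mathbf{X}$ and check by direct substitution that $\mathbf{F}\mathbf{G}_0\mathbf{H}^* = \mathbf{F}\mathbf{Y}^*\mathbf{A}\mathbf{X}\mathbf{H}^* = \mathbf{A}$ — which is precisely the hypothesis $\eqref{eq:PenroseNSC-oblique}$. This both proves sufficiency and identifies one solution. To obtain the \emph{general} solution, I would characterize the solution set as $\mathbf{G}_0$ plus the kernel of the linear map $\mathbf{Z} \mapsto \mathbf{F}\mathbf{Z}\mathbf{H}^*$. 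For the homogeneous part, I would verify two things: (i) every $\mathbf{W} - \mathbf{Y}^*\mathbf{F}\mathbf{W}\mathbf{H}^*\mathbf{X}$ lies in that kernel, since $\mathbf{F}(\mathbf{W} - \mathbf{Y}^*\mathbf{F}\mathbf{W}\mathbf{H}^*\mathbf{X})\mathbf{H}^* = \mathbf{F}\mathbf{W}\mathbf{H}^* - (\mathbf{F}\mathbf{Y}^*\mathbf{F})\mathbf{W}(\mathbf{H}^*\mathbf{X}\mathbf{H}^*) = \mathbf{F}\mathbf{W}\mathbf{H}^* - \mathbf{F}\mathbf{W}\mathbf{H}^* = \mathbf{0}$; and (ii) conversely, if $\mathbf{F}\mathbf{Z}\mathbf{H}^* = \mathbf{0}$, then taking $\mathbf{W} = \mathbf{Z}$ recovers $\mathbf{Z}$ itself from the expression, because $\mathbf{Z} - \mathbf{Y}^*(\mathbf{F}\mathbf{Z}\mathbf{H}^*)\mathbf{X} = \mathbf{Z} - \mathbf{0} = \mathbf{Z}$. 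Together (i) and (ii) show the parametrization by arbitrary $\mathbf{W}$ sweeps out exactly the homogeneous solution space, so $\mathbf{G} = \mathbf{Y}^*\mathbf{A}\mathbf{X} + \mathbf{W} - \mathbf{Y}^*\mathbf{F}\mathbf{W}\mathbf{H}^*\mathbf{X}$ is the general solution.

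Finally, I would close the equivalence by noting necessity is immediate: if $\eqref{eq:PenroseNSC-oblique}$ fails, no $\mathbf{G}$ can satisfy $\eqref{eq:PenroseLME-oblique}$ by the contrapositive of the first implication. The only genuine subtlety — the step I expect to be the main obstacle — is part (ii) of the surjectivity-onto-the-kernel argument, i.e.\ confirming that the ansatz $\mathbf{W} - \mathbf{Y}^*\mathbf{F}\mathbf{W}\mathbf{H}^*\mathbf{X}$ does not \emph{miss} any homogeneous solutions; this rests on the clean fact that it acts as the identity on the kernel of $\mathbf{Z}\mapsto\mathbf{F}\mathbf{Z}\mathbf{H}^*$, which in turn hinges entirely on the generalized-inverse identities supplied by Theorem~\ref{Theorem:idempotence} under the rank-preserving hypothesis. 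Everything else is routine substitution, so the proof is short and essentially a transcription of Penrose's argument with $\mathbf{F}^+,(\mathbf{H}^*)^+$ replaced by the oblique generalized inverses $\mathbf{Y}^*,\mathbf{X}$.
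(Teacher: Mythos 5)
Your proposal is correct and follows the same Penrose-style constructive argument as the paper: collapse $\mathbf{F}\mathbf{Y}^*(\mathbf{F}\mathbf{G}\mathbf{H}^*)\mathbf{X}\mathbf{H}^*$ via the generalized-inverse identities, exhibit $\mathbf{Y}^*\mathbf{A}\mathbf{X}$ as a particular solution, and add the homogeneous term $\mathbf{W}-\mathbf{Y}^*\mathbf{F}\mathbf{W}\mathbf{H}^*\mathbf{X}$. In fact your write-up is slightly more complete than the paper's, which only checks that the ansatz lies in the kernel of $\mathbf{Z}\mapsto\mathbf{F}\mathbf{Z}\mathbf{H}^*$ and leaves implicit your step (ii) --- that setting $\mathbf{W}=\mathbf{Z}$ recovers any homogeneous solution $\mathbf{Z}$, so no solutions are missed.
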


\begin{proof}
If $\mathbf{G}$ satisfies (\ref{eq:PenroseLME-oblique}), then $\mathcal{C}(\mathbf{F}) = \mathcal{C}(\mathbf{A})$ and $\mathcal{C}(\mathbf{H}^*) = \mathcal{C}(\mathbf{A}^*)$, and given idempotent $\mathbf{F}\mathbf{Y}^*$ and $\mathbf{H}^*\mathbf{X}$, we have:
\begin{align}
\mathbf{A} &= 
\mathbf{F}
\mathbf{G}
\mathbf{H}^*
=
\mathbf{F}\mathbf{Y}^*\mathbf{F}
\mathbf{G}
\mathbf{H}^*\mathbf{X}\mathbf{H}^*
=
\mathbf{F}\mathbf{Y}^*
\mathbf{A}
\mathbf{X}\mathbf{H}^*.
\end{align}
Therefore, we can construct the following solution of (\ref{eq:PenroseLME-oblique}):
\begin{align}
\mathbf{G} = 
\mathbf{Y}^*
\mathbf{A}
\mathbf{X}.
\end{align}

Conversely, if (\ref{eq:PenroseNSC-oblique}) holds, then $\mathbf{G} =\mathbf{Y}^*\mathbf{A}\mathbf{X}$ is a particular solution of (\ref{eq:PenroseLME-oblique}). To get the general solution, we must introduce a solution to the homogeneous equation, $\mathbf{F}\mathbf{G}\mathbf{H}^* = \mathbf{0}$. Consider
\begin{align}
\mathbf{G} = \mathbf{W} - \mathbf{Y}^*\mathbf{F}\mathbf{W}\mathbf{H}^*\mathbf{X}.
\end{align}
It is now easy to see that the equation holds if $\mathbf{Y}^*$ and $\mathbf{X}$ are the generalized inverses, which indeed is the case when (\ref{eq:YandX}) hold as demonstrated in \cite{KarpowiczMFT}. It follows that:
\begin{align}
\begin{aligned}
\mathbf{F}\mathbf{G}\mathbf{H}^* 
&= 
\mathbf{F}\mathbf{W}\mathbf{H}^* - \mathbf{F}\mathbf{Y}^*\mathbf{F}\mathbf{W}\mathbf{H}^*\mathbf{X}\mathbf{H}^*
= \mathbf{0}.
\end{aligned}
\end{align}
Therefore, the general solution to (\ref{eq:PenroseLME-oblique}) is given by (\ref{eq:PenroseProj-oblique}).
\end{proof}

The next step is to introduce hidden projectors emerging from factorization factors. We will now formulate our main result. It extends the reduced form of matrix factorization to the general form admitting rank-deficient matrices and shows how to design matrix decomposition with hidden projectors.

\begin{theorem}\label{thm:MFT-LME}
Consider a rank-$k$ matrix $\mathbf{A}\in\mathbb{K}^{m\times n}$.
Suppose $\mathbf{F}\in\mathbb{K}^{m\times r}$ and $\mathbf{H}\in\mathbb{K}^{m\times q}$, where $k\le r$ and $k\le q$, are such that $\mathcal{C}(\mathbf{A}) \subset \mathcal{C}(\mathbf{F})$ and $\mathcal{C}(\mathbf{A}^*) \subset \mathcal{C}(\mathbf{H})$. Also, suppose that $\mathbf{B}\in\mathbb{K}^{m\times r}$ and $\mathbf{D}\in\mathbb{K}^{n\times q}$ meet the following rank-preserving condition:
\begin{align}
\mathrm{rank}(\mathbf{B}^*\mathbf{F}) = \mathrm{rank}(\mathbf{H}^*\mathbf{D}) = k.
\end{align}

If 
\begin{align}\label{eq:YandX}
\mathbf{Y}^* = (\mathbf{B}^*\mathbf{F})^+\mathbf{B}^*
\quad\text{and}\quad
\mathbf{X} = \mathbf{D}(\mathbf{H}^*\mathbf{D})^+,
\end{align}
then $\mathbf{A}$ can be factorized in the form
\begin{align}\label{eq:MF-LME}
\mathbf{A} = \mathbf{F}\mathbf{G}\mathbf{H}^*,
\end{align}
where
\begin{align}\label{eq:MF-LME-solution}
\mathbf{G} = 
\mathbf{Y}^*
\mathbf{A}
\mathbf{X}
+ 
\mathbf{W} - \mathbf{Y}^*\mathbf{F}\mathbf{W}\mathbf{H}^*\mathbf{X}
\end{align}
and $\mathbf{W}$ is arbitrary.
\end{theorem}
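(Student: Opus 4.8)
The plan is to reduce Theorem~\ref{thm:MFT-LME} to the already-proved Theorem~\ref{thm:Penrose} by verifying that its hypotheses are met. Theorem~\ref{thm:Penrose} gives the general solution of $\mathbf{F}\mathbf{G}\mathbf{H}^* = \mathbf{A}$ in exactly the form (\ref{eq:MF-LME-solution}), provided two things hold: first, that $\mathbf{Y}^*$ and $\mathbf{X}$ defined by (\ref{eq:YandX}) are generalized inverses of $\mathbf{F}$ and $\mathbf{H}^*$ respectively, i.e.\ $\mathbf{F}\mathbf{Y}^*\mathbf{F} = \mathbf{F}$ and $\mathbf{H}^*\mathbf{X}\mathbf{H}^* = \mathbf{H}^*$; and second, the solvability condition (\ref{eq:PenroseNSC-oblique}), namely $\mathbf{F}\mathbf{Y}^*\mathbf{A}\mathbf{X}\mathbf{H}^* = \mathbf{A}$. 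So the proof has two parts: establish the generalized-inverse property, then establish solvability.

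For the first part, I would invoke Theorem~\ref{Theorem:idempotence}. There is a subtlety, though: Theorem~\ref{Theorem:idempotence} is stated under the condition $\mathrm{rank}(\mathbf{B}^*\mathbf{F}) = \mathrm{rank}(\mathbf{H}^*\mathbf{D}) = \mathrm{rank}(\mathbf{F}) = \mathrm{rank}(\mathbf{H}) = k$, whereas here we are only told $\mathrm{rank}(\mathbf{B}^*\mathbf{F}) = \mathrm{rank}(\mathbf{H}^*\mathbf{D}) = k$ together with $\mathcal{C}(\mathbf{A}) \subset \mathcal{C}(\mathbf{F})$ and $\mathcal{C}(\mathbf{A}^*) \subset \mathcal{C}(\mathbf{H})$ for a rank-$k$ matrix $\mathbf{A}$. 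The first step is therefore to note that $\mathrm{rank}(\mathbf{B}^*\mathbf{F}) \le \mathrm{rank}(\mathbf{F}) \le r$ forces $\mathrm{rank}(\mathbf{F}) \ge k$, and I would argue $\mathrm{rank}(\mathbf{F}) = k$ exactly: since $\mathbf{B}^*\mathbf{F}$ has rank $k$ but $\mathbf{F}$ itself could a priori have larger rank, one needs the rank-preserving hypothesis on $\mathbf{B}$ to pin $\mathrm{rank}(\mathbf{F}) = k$ --- indeed if $\mathrm{rank}(\mathbf{F}) > k$ then no $\mathbf{B}$ could make $\mathbf{B}^*\mathbf{F}$ drop only to rank $k$ while still being called ``rank-preserving''; more carefully, the term ``rank-preserving'' in the paper's usage (cf.\ the discussion after (\ref{eq:YX-projector})) means $\mathrm{rank}(\mathbf{B}^*\mathbf{F}) = \mathrm{rank}(\mathbf{F})$, so $\mathrm{rank}(\mathbf{F}) = k$ and likewise $\mathrm{rank}(\mathbf{H}) = k$. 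With all four ranks equal to $k$, Theorem~\ref{Theorem:idempotence} applies verbatim and yields $\mathbf{F}\mathbf{Y}^*\mathbf{F} = \mathbf{F}$ and $\mathbf{H}^*\mathbf{X}\mathbf{H}^* = \mathbf{H}^*$.

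For the second part, I would use the column-space inclusions. Since $\mathcal{C}(\mathbf{A}) \subset \mathcal{C}(\mathbf{F})$, every column of $\mathbf{A}$ is a combination of columns of $\mathbf{F}$, so $\mathbf{A} = \mathbf{F}\mathbf{F}^{+}\mathbf{A}$; because $\mathbf{F}\mathbf{Y}^*$ is an idempotent with the same column space as $\mathbf{F}$ (it is a rank-$k$ projector onto $\mathcal{C}(\mathbf{F})$, by Theorem~\ref{Theorem:idempotence} and the remarks in Section~\ref{sec:The projection equation}), it acts as the identity on $\mathcal{C}(\mathbf{F}) \supset \mathcal{C}(\mathbf{A})$, hence $\mathbf{F}\mathbf{Y}^*\mathbf{A} = \mathbf{A}$. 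Symmetrically, $\mathcal{C}(\mathbf{A}^*) \subset \mathcal{C}(\mathbf{H})$ gives $\mathbf{A}\mathbf{X}\mathbf{H}^* = \mathbf{A}$ after transposing, since $\mathbf{X}\mathbf{H}^*$ is a rank-$k$ projector onto the relevant row space. Combining, $\mathbf{F}\mathbf{Y}^*\mathbf{A}\mathbf{X}\mathbf{H}^* = \mathbf{A}\mathbf{X}\mathbf{H}^* = \mathbf{A}$, which is precisely (\ref{eq:PenroseNSC-oblique}). Theorem~\ref{thm:Penrose} then delivers (\ref{eq:MF-LME}) with the general solution (\ref{eq:MF-LME-solution}), completing the argument.

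The main obstacle I anticipate is the first part --- carefully reconciling the rank hypotheses. The reader needs convincing that ``$\mathbf{B}$, $\mathbf{D}$ rank-preserving'' plus ``$\mathbf{A}$ rank $k$ with the column-space inclusions'' genuinely forces $\mathrm{rank}(\mathbf{F}) = \mathrm{rank}(\mathbf{H}) = k$, so that Theorem~\ref{Theorem:idempotence} is legitimately applicable; this is where a slightly fussy rank-inequality argument is unavoidable. Once the ranks line up, the rest is a clean two-line invocation of Theorems~\ref{Theorem:idempotence} and \ref{thm:Penrose} plus the projector-acts-as-identity observation, with no heavy computation.
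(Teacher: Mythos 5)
Your proposal follows essentially the same route as the paper's own proof: invoke Theorem~\ref{Theorem:idempotence} to obtain the generalized-inverse property of $\mathbf{Y}^*$ and $\mathbf{X}$, deduce that $\mathbf{F}\mathbf{Y}^*$ and $\mathbf{X}\mathbf{H}^*$ project onto $\mathcal{C}(\mathbf{A})$ and $\mathcal{C}(\mathbf{A}^*)$ so that $\mathbf{F}\mathbf{Y}^*\mathbf{A}\mathbf{X}\mathbf{H}^* = \mathbf{A}$, and then apply Theorem~\ref{thm:Penrose}. The only substantive difference is that you are more careful than the paper about the hypothesis $\mathrm{rank}(\mathbf{F}) = \mathrm{rank}(\mathbf{H}) = k$, which Theorem~\ref{Theorem:idempotence} genuinely requires, which the Section~2 restatement of the theorem includes, but which the statement of Theorem~\ref{thm:MFT-LME} itself omits --- your reading of ``rank-preserving'' as supplying this condition is the right repair, and without it the counterexample $\mathbf{F} = \mathbf{I}$ with $\mathrm{rank}(\mathbf{B}) = k < m$ would break both the particular and the homogeneous parts of the solution.
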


\begin{proof}
By assumption, each column of $\mathbf{A}$ can be represented by a combination of columns of $\mathbf{F}$ and each row of $\mathbf{A}$ by a combination of rows of $\mathbf{H}$.
By Theorem \ref{Theorem:idempotence}, $\mathbf{Y}^*$ and $\mathbf{X}$ are generalized inverses. It follows that $\mathbf{F}\mathbf{Y}^*$ and $\mathbf{X}\mathbf{H}^*$ are projectors for $\mathcal{C}(\mathbf{A})$ and $\mathcal{C}(\mathbf{A}^*)$ and, by Theorem~\ref{thm:Penrose}, $\mathbf{G}$ solves $\mathbf{A} = \mathbf{F}\mathbf{G}\mathbf{H}^*$. Since
$\mathbf{F}\mathbf{W}\mathbf{H}^* = \mathbf{F}\mathbf{Y}^*\mathbf{F}\mathbf{W}\mathbf{H}^*\mathbf{X}\mathbf{H}^*$ for any $\mathbf{W}$, by the properties of projections we have:
\begin{align}
\mathbf{F}\mathbf{G}\mathbf{H}^*
= 
\mathbf{F}\mathbf{Y}^*\mathbf{A}\mathbf{X}\mathbf{H}^*
=
\mathbf{A},
\end{align}
as desired.
\end{proof}

The theorem shows how to reconstruct a rank-k matrix from a product of matrices of possibly higher rank that are possibly rank-deficient. The representation depends on parameters that can be selected subject to rank-preserving conditions. Notice that we have extended the reduced model to the general one so that:
\begin{align}\label{eq:GMMF}
\setstackgap{L}{15pt}\def\stacktype{L}\def\sz{\scriptstyle}
\begin{aligned}
\stackunder{\mathbf{A}}{\sz (m\times n)} &= 
\stackunder{\mathbf{F}}{\sz (m\times r)} &&
\stackunder{\mathbf{G}}{\sz (r\times q)} &&
\stackunder{\mathbf{H}^*.}{\sz (q\times n)}
\end{aligned}
\end{align}

The mixing matrix $\mathbf{G}$ is now rectangular, but it is still defined by the factors $\mathbf{F}$ and $\mathbf{H}^*$. These, in turn, can be derived in many ways, some of which we saw in Section~\ref{sec:One form to rule them all}. The following Matlab code illustrates the key concepts.

\begin{verbatim}
% A is given, define settings
[m,n]   = size(A);
k       = rank(A);
r       = ...; % e.g., r = ceil(k*3);
q       = ...; % e.g., q = ceil(k*2);
% Define factors of projectors (random sampling of A)
F       = A*randn(n,r);     B = rand(m,r);
H       = A'*randn(m,q);    D = rand(n,q);
% Check rank conditions
rankDef = [k,rank(B'*F),rank(F),rank(H'*D), rank(H)]
% Define generalized inverses
Y       = (pinv(B'*F)*B')'; X = D*pinv(H'*D);
% Calculate G and reconstruct A
W       = randn(r,q);
G       = Y'*A*X + W - Y'*F*W*H'*X; 
Ar      = F*G*H';
% Calculate errors
err     = norm(Ar-A,'fro')/norm(A,'fro')
err1    = norm(F*Y'*F-F,'fro')
err2    = norm(H'*X*H'-H','fro')
\end{verbatim}

\section{Making it all work}
\label{sec:Making it all work}

Let us see how $\mathbf{F}$ and $\mathbf{H}^*$ define a solution $\mathbf{G}$ to a linear matrix equation. Consider the case of the CUR decomposition of:
\begin{align}
\begin{aligned}
\mathbf{A} = 
\begin{bmatrix}
0 & 1 & 1/2\\ 1 & 2 & 3/2 \\2 & 7 & 9/2
\end{bmatrix}.
\end{aligned}
\end{align}
Selecting $\mathbf{F}$ and $\mathbf{H}$ as before, we need to find $\mathbf{G}$ such that:
\begin{align}
\begin{aligned}
\mathbf{F}\mathbf{G}\mathbf{H}^* =  
\begin{bmatrix}
0 & 1 \\ 1 & 2 \\2 & 7
\end{bmatrix}
\mathbf{G}
\begin{bmatrix}
0 & 1 & 1/2\\ 1 & 2 & 3/2 
\end{bmatrix}
=
\begin{bmatrix}
0 & 1 & 1/2\\ 1 & 2 & 3/2 \\2 & 7 & 9/2
\end{bmatrix}
= \mathbf{A}.
\end{aligned}
\end{align}
Given the previously constructed solutions to the projector equation (\ref{eq:projector-eq}), obtained for randomly selected $\mathbf{B}$ and~$\mathbf{D}$, namely:
\begin{align}
\begin{aligned}
\mathbf{Y}^* = (\mathbf{B}^*\mathbf{F})^+\mathbf{B}^* =
\begin{bmatrix}
-3/2 & \phantom{-}4/3 & -1/6\\\phantom{-}1/2 & -1/3 & \phantom{-}1/6
\end{bmatrix}
\quad\text{and}\quad
\mathbf{X} = \mathbf{D}(\mathbf{H}^*\mathbf{D})^+ =
\begin{bmatrix}
-7/3 & 1\\ \phantom{-}2/3 & 0 \\ \phantom{-}2/3 &0
\end{bmatrix},
\end{aligned}
\end{align}
for any $\mathbf{W}$ we get:
\begin{align}
\begin{aligned}
\mathbf{G} = 
\mathbf{Y}^*
\mathbf{A}
\mathbf{X}
+ 
\mathbf{W} - \mathbf{Y}^*\mathbf{F}\mathbf{W}\mathbf{H}^*\mathbf{X}
=
\begin{bmatrix}
-2 & 1\\ \phantom{-}1 & 0
\end{bmatrix}
+ \mathbf{W} - 
\begin{bmatrix}
1 & 0\\ 0 & 1
\end{bmatrix}
\mathbf{W}
\begin{bmatrix}
1 & 0\\ 0 & 1
\end{bmatrix} = 
\begin{bmatrix}
-2 & 1\\ \phantom{-}1 & 0
\end{bmatrix}.
\end{aligned}
\end{align}
As a result:
\begin{align}
\begin{aligned}
\mathbf{F}\mathbf{G}\mathbf{H}^* =  
\begin{bmatrix}
0 & 1 \\ 1 & 2 \\2 & 7
\end{bmatrix}
\begin{bmatrix}
-2 & 1\\ \phantom{-}1 & 0
\end{bmatrix}
\begin{bmatrix}
0 & 1 & 1/2\\ 1 & 2 & 3/2 
\end{bmatrix}
=
\begin{bmatrix}
0 & 1 & 1/2\\ 1 & 2 & 3/2 \\2 & 7 & 9/2
\end{bmatrix}
= \mathbf{A}.
\end{aligned}
\end{align}

Next, consider the case of rank-deficient matrices:
\begin{align}
\begin{aligned}
\mathbf{A} = \mathbf{F} = \mathbf{H}^* = 
\begin{bmatrix}
0 & 1 & 1/2\\ 0 & 2 & 1
\end{bmatrix}
\end{aligned}
\end{align}
and:
\begin{align}
\begin{aligned}
\mathbf{B} = 
\begin{bmatrix}
1 & 0 & 1\\ 2 & 0 & 2
\end{bmatrix}
\quad\text{and}\quad
\mathbf{D} =
\begin{bmatrix}
1 & 1\\0 & 0\\ 1 & 1
\end{bmatrix}.
\end{aligned}
\end{align}
We have already found that:
\begin{align}
\begin{aligned}
\mathbf{Y}^* &= (\mathbf{B}^*\mathbf{F})^+\mathbf{B}^* =
\begin{bmatrix}
0    & 0 & 0       \\
2/25 & 0 & 2/25    \\
1/25 & 0 & 1/25   
\end{bmatrix}
\begin{bmatrix}
1 & 2 \\
0 & 0 \\    
1 & 2    
\end{bmatrix} = 
\begin{bmatrix}
0    &  0    \\   
4/25 &  8/25 \\   
2/25 &  4/25
\end{bmatrix}
\end{aligned}
\end{align}
and
\begin{align}
\begin{aligned}
\mathbf{X} &= \mathbf{D}(\mathbf{H}^*\mathbf{D})^+ =
\begin{bmatrix}
1 & 1 \\       
0 & 0 \\      
1 & 1 
\end{bmatrix}
\begin{bmatrix}
1/5 & 2/5 \\     
1/5 & 2/5  
\end{bmatrix}
=
\begin{bmatrix}
2/5 & 4/5 \\     
0   & 0   \\  
2/5 & 4/5 
\end{bmatrix}.
\end{aligned}
\end{align}

So, we have:
\begin{align}
\begin{aligned}
\mathbf{G} &= 
\begin{bmatrix}
0    &  0    \\   
4/25 &  8/25 \\   
2/25 &  4/25
\end{bmatrix}
\begin{bmatrix}
0 & 1 & 1/2\\ 0 & 2 & 1
\end{bmatrix}
\begin{bmatrix}
2/5 & 4/5 \\     
0   & 0   \\  
2/5 & 4/5 
\end{bmatrix}
+ 
\mathbf{W} - \mathbf{Y}^*\mathbf{F}\mathbf{W}\mathbf{H}^*\mathbf{X}
\\
&=
\begin{bmatrix}
0    &  0    \\   
4/25 &  8/25 \\   
2/25 &  4/25
\end{bmatrix}
+ 
\mathbf{W} - \mathbf{Y}^*\mathbf{F}\mathbf{W}\mathbf{H}^*\mathbf{X}
.
\end{aligned}
\end{align}
For any $\mathbf{W}$ we then obtain:
\begin{align}
\begin{aligned}
\mathbf{F}\mathbf{G}\mathbf{H}^* =  
\begin{bmatrix}
0 & 1 & 1/2\\ 0 & 2 & 1
\end{bmatrix}
\begin{bmatrix}
0    &  0    \\   
4/25 &  8/25 \\   
2/25 &  4/25
\end{bmatrix}
\begin{bmatrix}
0 & 1 & 1/2\\ 0 & 2 & 1
\end{bmatrix}
=
\begin{bmatrix}
0 & 1 & 1/2\\ 0 & 2 & 1
\end{bmatrix}
= \mathbf{A}.
\end{aligned}
\end{align}

\section{Hiding secrets with hidden projectors}
\label{sec:Hiding secrets with hidden projectors}

We have taken many steps to show how matrix factorization works and explain its internal mechanisms in terms of projections. It is time to apply that understanding in practice. One application of great importance comes from the world of cryptography. We aim to design a simple cryptographic system encrypting and decrypting messages. 


Let $M$ denote a message and $C$ its encrypted form, called ciphertext. The encryption function, $E$, operates on $M$ to produce~$C$:
\begin{align}
E(M) = C.
\end{align}
To decrypt $C$, we need the decryption function, $D$. It should operate on the ciphertext to retrieve the message:
\begin{align}
D(E(M)) = M.
\end{align}
One of the secure ways to implement cryptographic functions is to use a public-key encryption setting. In such a setting, each sender needs a pair of interrelated keys, a secret (or private) one, $S$, and a (commonly known) public one, $P$. The public key is necessary to encrypt a message addressed to the owner of the secret key:
\begin{align}
E(P,M) = C.
\end{align}
To decrypt the ciphertext, one needs the secret key:
\begin{align}
D(S,C) = M.
\end{align}
By design (or assumption), the only way to decrypt the message encrypted with the public key is to use the secret (private) key, so:
\begin{align}
D(S,E(P,M)) = M.
\end{align}
There is one critical rule to follow, known as the
\textit{Kerckhoffs' principle}. The encryption scheme must depend only on the secrecy of the (secret) key and not on the secrecy of the algorithm. It must be extremely difficult (practically impossible) to recover the secret key from the public key and the commonly known properties of the applied cryptographic algorithm. Let us be clear: many other challenges are involved in applied cybersecurity engineering, including the fundamental key distribution problem. In this short note, to illustrate the main concepts and pose challenging questions, we will only address Kerckhoffs' principle.

To design a public-key-based cryptographic system $(E,D,S,P)$, we will use Theorem~\ref{Theorem:idempotence} and exploit the fact that there are many different ways of constructing projectors and matrix decompositions.

We must start with a dictionary to represent messages (letters, words, sentences, etc.). It will have a form of an $m$ by $n$ rank-$k$ (rank-deficient) matrix $\mathbf{A}$. One interesting idea is to use word embeddings produced by a neural network layer trained to reconstruct the set of messages. Another is to use any numerical encoding of an alphabet or dataset.

Now we can construct an $m$ by $r$ matrix $\mathbf{F}$, where $r$ is greater than $k$, the key component of the design. Considered the case in which $\mathcal{C}(\mathbf{A})$ is the same as $\mathcal{C}(\mathbf{F})$. Notice that $\mathbf{F}$ is rank-deficient. To construct a basis for $\mathcal{C}(\mathbf{F})$, we can use any factorization algorithm. One approach is to use QR factorization. First, we create an orthogonal basis $\mathbf{Q}$ for $\mathcal{C}(\mathbf{A})$. Then, we construct $r$ columns of $\mathbf{F}$ by taking $k$ columns of $\mathbf{Q}$ and then $r-k$ combinations of those columns. Other factorization methods, such as LU, SVD, and similarity transformation, can be used similarly. We can also use random sampling (e.g., sparse binary) to construct a randomized basis, which is probably the best idea to follow since it makes $\mathcal{C}(\mathbf{A})$ difficult to guess.

There is one more step to take. Let $\mathbf{B}_1$ and $\mathbf{B}_2$ be random $m$ by $r$ matrices such that $\mathrm{rank}(\mathbf{B}_1^*\mathbf{F}) = \mathrm{rank}(\mathbf{B}_1^*\mathbf{F}) = k$. Since they are rank-preserving, 
\begin{align}
\mathbf{Y}^*_{\mathbf{B}_i} = (\mathbf{B}_i^*\mathbf{F})^+\mathbf{B}_i^*,\ i=1,2,
\end{align}
is (randomized) generalized inverse of $\mathbf{F}$. 

Consider now the following pair of keys:
\begin{align}\label{eq:CSP-MF-keys}
S = \mathbf{F}
\quad\mathrm{and}\quad
P =(\mathbf{Y}^*_{\mathbf{B}_1}, \mathbf{Y}^*_{\mathbf{B}_2}\mathbf{F}).
\end{align}
Given the public key $P$ above, the encryption function can be defined as follows:
\begin{align}\label{eq:CSP-MF-enc}
E(P,\mathbf{m}) = \mathbf{Y}^*_{\mathbf{B}_1}\mathbf{m} + (\mathbf{I}-\mathbf{Y}^*_{\mathbf{B}_2}\mathbf{F})\mathbf{w},
\end{align}
where $\mathbf{w}$ is arbitrary. The encryption function produces ciphertext $\mathbf{c}$ for any message $\mathbf{m}$ being a column of $\mathbf{A}$. 

Given the secret (private) key $S$, the associated decryption function, decrypting ciphertext $\mathbf{c}$, is then given by:
\begin{align}\label{eq:CSP-MF-dec}
D(S,\mathbf{c}) = \mathbf{F}\mathbf{c}.
\end{align}
Theorem~\ref{thm:MFT-LME} shows that the decryption reconstructs the encrypted message:
\begin{align}
D(S,E(P,\mathbf{m})) = 
\mathbf{F}\mathbf{Y}^*_{\mathbf{B}_1}\mathbf{m} + (\mathbf{F}-\mathbf{F}\mathbf{Y}^*_{\mathbf{B}_2}\mathbf{F})\mathbf{w}
= \mathbf{m}.
\end{align}
We can also produce the encrypted dictionary:
\begin{align}
E(P,\mathbf{A}) = 
\mathbf{Y}^*_{\mathbf{B}_1}\mathbf{m} + (\mathbf{I}-\mathbf{Y}^*_{\mathbf{B}_2}\mathbf{F})\mathbf{W} = \mathbf{C}.
\end{align}
We use the ones-sided random and oblique column-space projector for decryption:
\begin{align}
D(S,E(P,\mathbf{A})) = 
\mathbf{F}\mathbf{Y}^*_{\mathbf{B}_1}\mathbf{A} + (\mathbf{F}-\mathbf{F}\mathbf{Y}^*_{\mathbf{B}_2}\mathbf{F})\mathbf{W}
= \mathbf{A}.
\end{align}
Below is the Matlab code illustrating the design concept.

\begin{verbatim}
% Private key: random sketching of A
F   = A*randn(n,r);
% Public key
B1  = randn(m,r); for i = k+1:r, B1(:,i) = B1(:,1:k)*randn(k,1);end
Y1  = (pinv(B1'*F)*B1')';
B2  = randn(m,r); for i = k+1:r, B2(:,i) = B2(:,1:k)*randn(k,1);end
Y2  = (pinv(B2'*F)*B2')';
% Encryption function (introducing random noise)
E   = @(M)Y1'*M + (eye(r)-Y2'*F)*randn(r,1);
% Decryption function
D   = @(C)F*C;
% Message: random sequence Mids of N columns of A
Mids = randi([1 n],1,N);
M   = A(:,Mids);
% Encrypt M
C   = E(M)
% Decrypt C
Md  = D(C);
% Decryption error
decryptionError = norm(M-Md,'fro')
\end{verbatim}

Let us now briefly address Kerckhoffs' principle by asking if it is difficult to reconstruct $S$ from $(E,P)$? Or, in other words, given $\mathbf{Y}^*_{\mathbf{B}_1}$ and $\mathbf{Y}^*_{\mathbf{B}_i}\mathbf{F}$, is it difficult to find $\mathbf{F}$?

The first property to notice is that the encryption function depends on $\mathbf{w}$, a random variable. Since $\mathbf{w}$ can change with every execution of the encryption function, we get random ciphertexts for the same message every time we use the encryption function above. If $\mathbf{c}_t = E(P,\mathbf{m})$ is a ciphertext generated for $\mathbf{m}$ at time instant $t$ with $\mathbf{w} = \mathbf{w}_t$, then $\mathbf{c}_{k} = E(P,\mathbf{m}) \neq \mathbf{c}_t$ at time instant $k \neq t$ whenever $\mathbf{w}_k \neq \mathbf{w}_t$. We can hide the original probability distribution of messages under the probability distribution of noise introduced into the ciphertext, thereby making the attacks based on frequency analysis ineffective.

The second important property of the system is that the general inverse $\mathbf{Y}^*_{\mathbf{B}_i}$ is not unique by design. It depends on an arbitrary and random rank-preserving matrix $\mathbf{B}_i$ and satisfies only the first of the four Penrose's pseudoinverse equations (we need all four to make pseudoinverse unique). Matsaglia and Styan demonstrate that important fact in~\cite{matsaglia1974equalities}. Since any rank-$k$ matrix $\mathbf{F}$ may be written as
\begin{align}
\mathbf{F} =
\mathbf{P}
\begin{bmatrix}
\mathbf{I}_k & \mathbf{0}
\\
\mathbf{0} & \mathbf{0}
\end{bmatrix}
\mathbf{S},
\end{align}
where $\mathbf{S}$ and $\mathbf{P}$ are invertible, every solution $\mathbf{Y}^*_{\mathbf{B}_i}$ to $\mathbf{F}\mathbf{Y}^*_{\mathbf{B}_i}\mathbf{F} = \mathbf{F}$ can be written as
\begin{align}
\mathbf{Y}^*_{\mathbf{B}_i} =
\mathbf{S}^{-1}
\begin{bmatrix}
\mathbf{I}_k & \mathbf{Z}_1
\\
\mathbf{Z}_2 & \mathbf{Z}_3
\end{bmatrix}
\mathbf{P}^{-1},
\end{align}
where $\mathbf{Z}_i,\ i = 1,2,3,$ is arbitrary (random). That makes $\mathbf{Y}^*_{\mathbf{B}_i}$ non-unique. We can now apply the same argument to show that there are many solutions $\bar{\mathbf{F}}$ to $\mathbf{Y}^*_{\mathbf{B}_i}\bar{\mathbf{F}}\mathbf{Y}^*_{\mathbf{B}_i} = \mathbf{Y}^*_{\mathbf{B}_i}$. Given $\mathbf{Y}^*_{\mathbf{B}_i}$, we can write it as
\begin{align}
\mathbf{Y}^*_{\mathbf{B}_i} =
\mathbf{P}_{\mathbf{B}_i}
\begin{bmatrix}
\mathbf{I}_k & \mathbf{0}
\\
\mathbf{0} & \mathbf{0}
\end{bmatrix}
\mathbf{S}_{\mathbf{B}_i},
\end{align}
and then show that
\begin{align}
\bar{\mathbf{F}} = 
\mathbf{S}^{-1}_{\mathbf{B}_i}
\begin{bmatrix}
\mathbf{I}_k & \bar{\mathbf{Z}}_1
\\
\bar{\mathbf{Z}}_2 & \bar{\mathbf{Z}}_3
\end{bmatrix}
\mathbf{P}^{-1}_{\mathbf{B}_i}
\end{align}
is the desired solution for arbitrary $\bar{\mathbf{Z}}_i,\ i = 1,2,3.$ Clearly, one of the solutions is $(\mathbf{Y}^*_{\mathbf{B}_i})^+$, the Penrose-Moore pseudoinverse. However, since $\mathbf{F}$ is rank-deficient, $(\mathbf{Y}^*_{\mathbf{B}_i})^+\mathbf{Y}^*_{\mathbf{B}_j}$ is not an identity matrix:
\begin{align}
\mathbf{Y}^*_{\mathbf{B}_i}\mathbf{F} \neq \mathbf{I}
\quad\mathrm{and}\quad
(\mathbf{Y}^*_{\mathbf{B}_i})^+\mathbf{Y}^*_{\mathbf{B}_j}\neq \mathbf{I}.
\end{align}
It follows that 
\begin{align}
(\mathbf{Y}^*_{\mathbf{B}_1})^+\mathbf{Y}^*_{\mathbf{B}_2}\mathbf{F}\neq \mathbf{F},
\end{align}
so we cannot isolate $\mathbf{F}$ from $\mathbf{Y}^*_{\mathbf{B}_2}\mathbf{F}$ with $(\mathbf{Y}^*_{\mathbf{B}_1})^+$. Similarly, there are many solutions $\mathbf{S}$ to homogenous equation:
\begin{align}
\mathbf{S}-\mathbf{S}\mathbf{Y}^*_{\mathbf{B}_2}\mathbf{F} =  \mathbf{0}.
\end{align}
Each one of them defines a nontrivial basis for the nullspace of $\mathbf{I}-\mathbf{Y}^*_{\mathbf{B}_2}\mathbf{F}$. 

The arguments above show that it may be very difficult to recover $S$ from $(E,P)$. Furthermore, extending the design with row space projectors can make the problem even harder. The general system is defined as follows:
\begin{align}\label{eq:CSRSP-MF-keys}
\begin{aligned}
S = (\mathbf{F},\mathbf{H}^*)
\quad\mathrm{and}\quad
P = ((\mathbf{Y}^*_{\mathbf{B}_1},\mathbf{Y}^*_{\mathbf{B}_2}\mathbf{F}),
(\mathbf{X}_{\mathbf{D}_1},\mathbf{H}^*\mathbf{X}_{\mathbf{D}_2})),
\end{aligned}
\end{align}
\begin{align}\label{eq:CSRSP-MF-encdec}
\begin{aligned}
E(P,\mathbf{m}) = 
\mathbf{Y}^*_{\mathbf{B}_1}\mathbf{m}\mathbf{X}_{\mathbf{D}_1} 
+ \mathbf{w} 
- \mathbf{Y}^*_{\mathbf{B}_2}\mathbf{F}\mathbf{w}\mathbf{H}^*\mathbf{X}_{\mathbf{D}_2}
\quad\mathrm{and}\quad
D(S,\mathbf{c}) = \mathbf{F}\mathbf{c}\mathbf{H}^*.
\end{aligned}
\end{align}

There is one more intriguing observation to make. The designed system seems related to the McEliece public-key encryption scheme based on error-correcting codes \cite{mceliece1978public,menezes2018handbook}. The McEliece scheme is defined as follows:
\begin{align}\label{eq:McEliece:keys}
\begin{aligned}
S = (\mathbf{S},\mathbf{G},\mathbf{P})
\quad\mathrm{and}\quad
P = (\mathbf{S}\mathbf{G}\mathbf{P},t),
\end{aligned}
\end{align}
\begin{align}\label{eq:McEliece:encdec}
\begin{aligned}
E(P,\mathbf{m}) = \mathbf{m}\mathbf{S}\mathbf{G}\mathbf{P}+\mathbf{z}
\quad\mathrm{and}\quad
D(S,\mathbf{c}) = H(\mathbf{c}\mathbf{P}^T\mathbf{G})\mathbf{S}^{-1},
\end{aligned}
\end{align}
where $\mathbf{G}$ is an $k$ by $n$ generator matrix for a binary $(n,k)$-linear code that can correct $t$ errors (see e.g. MacWilliams and Sloane \cite{macwilliams1977theory}), $H$ is an efficient decoding algorithm for the code generated by $\mathbf{G}$, $\mathbf{z}$ is a~random binary error vector of length $n$ containing at most $t$ nonzero elements (ones) and injected into the ciphertext, $\mathbf{S}$ is a non-singular $k$ by $k$ matrix, and $\mathbf{P}$ is an $n$ by $n$ permutation matrix. 

The decoding function is designed to give output $H(\mathbf{c}\mathbf{P}^T|\mathbf{G}) = \mathbf{m}\mathbf{S}$. That happens when the binary message $\mathbf{m}$ is separated from the error vector $\mathbf{z}\mathbf{P}^T$ by the error-correcting mechanism. For a linear code, it is enough to generate messages from the nullspace of the parity-check matrix $\mathbf{H} = [-\mathbf{G}_0^T|\mathbf{I}_{n-k}]$, which must be related to $\mathbf{G} = [\mathbf{I}_k|\mathbf{G}_0]$. Since $\mathbf{m}\mathbf{S}\mathbf{G}\mathbf{H}^T = \mathbf{0}$, we have
$\mathbf{c}\mathbf{P}^T\mathbf{H}^T = \mathbf{z}\mathbf{P}^T\mathbf{H}^T$, which next can be used to calculate $\mathbf{m}\mathbf{S}$ with a~predefined error-correcting mechanism. As a~result, given the non-singular $\mathbf{S}$, we have $\mathbf{m}\mathbf{S}\mathbf{S}^{-1} = \mathbf{m}$ and the ciphertext is correctly decrypted. The key step is to correct errors in $\hat{\mathbf{c}} = (\mathbf{m}\mathbf{S})\mathbf{G}+\mathbf{z}\mathbf{P}^T$ to obtain $\mathbf{m}\mathbf{S}$. That is the role of the mapping $H$ introducing essential security measures. 

Also, notice that given the full rank matrix $\mathbf{G}$, we have $\mathbf{G}\mathbf{G}^+ = \mathbf{I}_k$ and for non-singular $\mathbf{S}$ we get
\begin{align}
(\mathbf{S}\mathbf{G}\mathbf{P})
(\mathbf{S}\mathbf{G}\mathbf{P})^+ = 
(\mathbf{S}\mathbf{G}\mathbf{P})
(\mathbf{P}^T\mathbf{G}^{+}\mathbf{S}^{-1}) = \mathbf{I}_k.
\end{align}
Therefore, $\mathbf{c}(\mathbf{S}\mathbf{G}\mathbf{P})^+ = \mathbf{m}+\mathbf{z}(\mathbf{P}^T\mathbf{G}^{+}\mathbf{S}^{-1})$.
As before, we can hide the probability distribution of messages under the probability distribution of the random error vector $\mathbf{z}$, thereby making the attacks based on frequency analysis ineffective. 

The security of the McEliece scheme depends critically on the complexity of decomposing $P = \mathbf{S}\mathbf{G}\mathbf{P}$ and calculating $\mathbf{G}^+$. To the author's best knowledge, an effective attack on that scheme (typically enhanced with the Goppa error-correcting model) is not currently known (see e.g. Engelbert et al. \cite{engelbert2007summary} and Xagawa et al. \cite{xagawa2021fault}).

As we can see, systems (\ref{eq:McEliece:keys})-(\ref{eq:McEliece:encdec}) and (\ref{eq:CSP-MF-keys})-(\ref{eq:CSP-MF-dec}) seem to share the same security properties. However, we can also argue the idea behind the McEliece scheme can be derived from the general design pattern presented in this paper. The McEliece scheme emerges when we exchange the role of $\mathbf{m}$ and $\mathbf{w}$ in (\ref{eq:CSP-MF-keys})-(\ref{eq:CSP-MF-dec}) and introduce error-correction for the code generating dictionary. From that point of view, system (\ref{eq:CSP-MF-keys})-(\ref{eq:CSP-MF-dec}), as well as its generalized version (\ref{eq:CSRSP-MF-keys})-(\ref{eq:CSRSP-MF-encdec}), define a~family of cryptographic functions.

The arguments above show how security depends on the secret key's secrecy, following Kerckhoffs' principle. In particular, in the case considered, the security depends on the complexity of finding generalized inverses for the encryption functions and immunity of the systems to the frequency analysis attacks. At this point, I leave the rigorous security level assessment open for further investigation.

\section{Reflections and closing remarks}

Decomposing a matrix into a product of interrelated components is a wonderful way to understand how a matrix works. The benefits coming from that understanding are well-known and proven by numerous applications. These include statistics, control, signal processing, optimization, and deep learning. But I also think there is more that we can learn from all that we have said above. And that is about what understanding means.

Let me explain with a wonderful and famous quotation by Stefan Banach:

\textit{A mathematician is a person who can find analogies between theorems; a better mathematician is one who can see analogies between proofs and the best mathematician can notice analogies between theories. One can imagine that the ultimate mathematician is one who can see analogies between analogies.}

I believe that it is what understanding is all about. To understand is to see that what we are looking at is something else in disguise. Isn't that what matrix factorizations show us?

Extracting key data components stored as numbers in a matrix, identifying column spaces and nullspaces, singular values, and the rank, all that explains how the matrix works. We can see each relationship between the numbers in detail. And the big picture, in which the bases for subspaces explain the way the subspaces interact. Here we see the column and row space projectors and (left) nullspace projectors. In this paper, we have taken a closer look at the structure of these projectors and shown that they are hidden in the factors of every matrix. Every matrix factorization is a projecting process in disguise.

I would like to share one more reflection. We are facing a revolution in artificial intelligence. The celebrated large language models are beginning to achieve goals and abilities we have never seen or thought possible. However, so far, we have not been able to understand why. Explaining artificial intelligence is one of the most fascinating open problems. What we know is that we can generate texts and images at an impressive human level or beyond with multilayered compositions of functions with billions of billions of parameters optimized in a long learning process with enormous training datasets. In contrast, we only need one parameter to explain and understand how the (almost) entire visible universe works. That is the cosmological constant of the Einstein field equations in the general relativity theory. We can compress the universe into the form of a single equation. What we see around us seems to be something deeply hidden in disguise. Isn't that beautiful?


\bibliographystyle{hplain}
\bibliography{references}
\end{document}